\let\cal\mathcal
\def\AA{{\cal A}}
\def\CC{{\cal C}}
\def\DD{{\cal D}}
\def\EE{{\cal E}}
\def\FF{{\cal F}}
\def\VV{{\cal V}}
\let\blb\mathbb
\def\bZ{{\blb Z}}
\def\bR{{\blb R}}
\def\bZ{{\blb Z}}
\author{Oliver Braunling}
\address{Oliver Braunling \\ Albert-Ludwigs-University Freiburg \\ Institute for Mathematics \\ D-79104 Freiburg \\ Germany}
\email{oliver.braeunling@math.uni-freiburg.de}
\author{Ruben Henrard}
\address{Ruben Henrard \\ Universiteit Hasselt \\ Campus Diepenbeek \\ Departement WNI \\ 3590 Diepenbeek \\ Belgium}
\email{ruben.henrard@uhasselt.be}
\author{Adam-Christiaan van Roosmalen}
\address{Adam-Christiaan van Roosmalen \\ Universiteit Hasselt \\ Campus Diepenbeek \\ Departement WNI \\ 3590 Diepenbeek \\ Belgium}
\email{adamchristiaan.vanroosmalen@uhasselt.be}
\title[$K$-Theory of Locally Compact Modules]{$K$-Theory of Locally Compact Modules over Orders}
\keywords{Locally compact modules, $K$-theory, exact category}
\subjclass[2020]{19B28, 19F05, 22B05; 18E35}
\date{\today}
\thanks{The first author was supported by DFG GK1821 \textquotedblleft Cohomological Methods
in Geometry\textquotedblright.}
\thanks{The third author was supported by FWO (12.M33.16N)}
\newtheorem{theorem}{Theorem}[section]
\newtheorem{proposition}[theorem]{Proposition}
\newtheorem{lemma}[theorem]{Lemma}
\newtheorem{corollary}[theorem]{Corollary}
\theoremstyle{definition}
\newtheorem{definition}[theorem]{Definition}
\newtheorem{remark}[theorem]{Remark}
\newtheorem{construction}[theorem]{Construction}
\DeclareMathOperator{\inflation}{\rightarrowtail}
\DeclareMathOperator{\deflation}{\twoheadrightarrow}
\newcommand{\ex}[1]{#1^{\operatorname{ex}}}
\DeclareMathOperator{\im}{im}
\DeclareMathOperator{\coker}{coker}
\DeclareMathOperator{\Hom}{Hom}
\DeclareMathOperator{\Fun}{Fun}
\DeclareMathOperator{\Ext}{Ext}
\DeclareMathOperator{\Mod}{Mod}
\DeclareMathOperator{\smod}{mod}
\renewcommand{\mod}{\operatorname{mod}}
\DeclareMathOperator{\fmod}{fmod}
\DeclareMathOperator{\LCA}{\mathsf{LCA}}
\DeclareMathOperator{\oLCA}{\overline{\mathsf{LCA}}}
\DeclareMathOperator{\LCAA}{\mathsf{LCA}_{\mathfrak{A}}}
\DeclareMathOperator{\LCAAC}{\mathsf{LCA}_{\mathfrak{A},\mathsf{C}}}
\DeclareMathOperator{\LCAAD}{\mathsf{LCA}_{\mathfrak{A},\mathsf{D}}}
\DeclareMathOperator{\LCAAR}{\mathsf{LCA}_{\mathfrak{A},\mathbb{R}}}
\DeclareMathOperator{\LCAACV}{\mathsf{LCA}_{\mathfrak{A},\mathsf{C}\mathbb{R}}}
\DeclareMathOperator{\LCAAf}{\mathsf{LCA}_{\mathfrak{A},\mathrm{f}}}
\DeclareMathOperator{\oLCAA}{\oLCA_\mathfrak{A}}
\DeclareMathOperator{\oLCAAD}{\oLCA_{\mathfrak{A},\mathsf{D}}}
\DeclareMathOperator{\oLCAACV}{\oLCA_{\mathfrak{A},\mathsf{C}\mathbb{R}}}
\DeclareMathOperator{\Ac}{\textbf{Ac}}
\DeclareMathOperator{\Kb}{\mathsf{K}^{\mathsf{b}}}
\DeclareMathOperator{\Db}{\mathsf{D}^{\mathsf{b}}}
\DeclareMathOperator{\DAb}{\mathsf{D}^{\mathsf{b}}_{\AA}}
\DeclareMathOperator{\Dbinf}{\mathsf{D}^{\mathsf{b}}_\infty}
\newcommand{\StabInfCat}{\mathrm{Cat}^{\mathrm{Ex}}_\infty}
\newcommand{\QC}{Q_\mathsf{C}}
\newcommand{\QCV}{Q_{\mathsf{C}\mathbb{R}}}
\newcommand{\QV}{Q_{\mathbb{R}}}
\newcommand{\QA}{Q_\A}
\newcommand{\QfA}{Q_{\A,f}}
\newcommand{\Qf}{Q_f}
\newcommand{\A}{\mathfrak{A}}
\begin{document}

\begin{abstract}
We present a quick approach to computing the $K$-theory of the category of locally compact modules over any order in a semisimple $\mathbb{Q}$-algebra.  We obtain the $K$-theory by first quotienting out the compact modules and subsequently the vector modules.  Our proof exploits the fact that the pair (vector modules plus compact modules, discrete modules) becomes a torsion theory after we quotient out the finite modules.  Treating these quotients as exact categories is possible due to a recent localization formalism.
\end{abstract}

\maketitle

\tableofcontents
\section{Introduction}
Suppose $A$ is a finite-dimensional semisimple $\mathbb{Q}$-algebra and
$\mathfrak{A}\subset A$ is any $\mathbb{Z}$-order.  We write $\smod(-)$ for the category of finitely generated right modules, $A_{\mathbb{R}}$ for $A \otimes_{\mathbb{Q}} \mathbb{R}$, and $\LCAA$ for the exact category of locally compact topological $\mathfrak{A}$-modules \cite{MR2329311}.  We give a new proof for the following theorem.

\begin{theorem}\label{theorem:MainTheorem}
For every localizing invariant $K\colon \StabInfCat \to \DD$ (where $\DD$ is any stable $\infty$-category), there is a canonical fiber sequence:
	\[K(\smod(\mathfrak{A}))\to {K}(\smod(A_{\mathbb{R}}))\to {K}(\LCAA),\]
where the first map is induced by the natural embedding $- \otimes_\bZ \bR \colon\A \to \A_\bR=A_\bR.$
\end{theorem}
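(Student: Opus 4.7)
My plan follows the two-step strategy announced in the abstract. Let $\LCAAC \subset \LCAA$ denote compact modules, $\LCAAR \simeq \smod(A_\bR)$ vector modules, $\LCAAD \simeq \smod(\A)$ discrete modules, and $\LCAAf$ finite modules (with $\LCAAf \subset \LCAAC$).

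\textbf{Step 1 (Kill compacts).} I would first apply the recent exact-category localization formalism to $\LCAAC \subset \LCAA$, producing the fiber sequence
\[K(\LCAAC) \to K(\LCAA) \to K(\LCAA/\LCAAC).\]
Since $\LCAAC$ is closed under countable products (Tychonoff) and the product functor on compact modules is exact, Eilenberg's swindle forces $K(\LCAAC) = 0$ for every localizing invariant $K$; hence $K(\LCAA) \simeq K(\LCAA/\LCAAC)$. Because $\LCAAf \subset \LCAAC$, this also kills all finite modules, preempting the usual ``mod finite'' subtleties in the next step.

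\textbf{Step 2 (Torsion pair in the quotient).} Inside $\LCAA/\LCAAC$ I would verify that the images of $\LCAAR$ and $\LCAAD$ form a torsion pair. Existence of the decomposition follows from the classical structure theorem for locally compact $\A$-modules: the torsion part ``vector plus compact'' appearing in the abstract's torsion theory in $\oLCAA$ reduces to just the vector part after Step 1 has killed compacts. The orthogonality $\Hom(\LCAAR,\LCAAD)=0$ holds because continuous maps from a vector module to a discrete module vanish by connectedness (and the compact-to-discrete contributions have already disappeared). The torsion-pair version of the exact-category localization formalism then produces a second fiber sequence relating $K(\LCAAR)$, $K(\LCAA/\LCAAC) \simeq K(\LCAA)$, and $K(\LCAAD)$, which combined with $\LCAAR \simeq \smod(A_\bR)$ and $\LCAAD \simeq \smod(\A)$ yields the fiber sequence in the theorem.

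\textbf{Main obstacle.} The principal technical obstacle is Step 2: I must check that the torsion-pair formalism in the exact-category setting produces a genuine fiber sequence of $K$-theories (rather than some weaker relation) and that the second quotient lands \emph{exactly} in $\LCAAD$ with no residual finite-modules ambiguity---here the elimination of $\LCAAf$ in Step 1 is essential, as without it the natural quotient would only give $\LCAAD/\LCAAf$. Validating the torsion-pair-to-fiber-sequence step is precisely where the paper's new localization formalism does the heavy lifting. A secondary difficulty is identifying the boundary maps and their orientation so that the assembled fiber sequence comes out with the direction and map $-\otimes_\bZ\bR$ prescribed by the theorem statement.
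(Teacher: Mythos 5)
Your Step 1 matches the paper, but Step 2 contains several genuine gaps, one of which is fatal.

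\textbf{Misidentification of $\LCAAD$.} You write $\LCAAD\simeq\smod(\A)$, but the category of discrete locally compact $\A$-modules is all of $\Mod\A$, not the finitely generated ones (any abstract module is locally compact when given the discrete topology). Thus $K(\LCAAD)$ vanishes by an Eilenberg swindle, and a fiber sequence ending in $K(\LCAAD)$ would collapse to $K(\smod A_\bR)\simeq K(\LCAA)$, which is false. The statement you want requires $\smod(\A)$ to enter through the comparison $\smod(\A)\to\Mod\A\to\Mod\A/\smod\A$, not through $\LCAAD$ itself.

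\textbf{The second quotient does not land in $\LCAAD$.} After killing compacts and then vectors, \emph{every finitely generated discrete module becomes zero}: for finitely generated $M$, the discrete inclusion $M\hookrightarrow \bR\otimes_\bZ M$ has compact cokernel, so $M$ is already isomorphic to a vector module once compacts are killed (this is Remark~\ref{remark:QCVFinitelyGenerated} in the paper). Hence the quotient is $\Mod\A/\smod\A$, not $\LCAAD$, and certainly not $\LCAAD/\LCAAf$. Your claim that killing $\LCAAf$ removes all residual ambiguity is therefore incorrect: what gets killed is $\smod\A$, and that is exactly what produces the $K(\smod\A)$ term by comparison with the localization sequence for $\Mod\A$.

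\textbf{``Torsion pair implies fiber sequence'' is not a theorem.} A torsion pair in an exact category does not by itself yield a $K$-theory localization sequence; you need a quotient of exact (or one-sided exact) categories by a suitably percolating subcategory. The paper's route is to pass to the exact hull $\ex{\EE}$ of the (merely inflation-exact) quotient $\EE=\LCAA/\LCAAC$, show $\VV$ is strictly inflation-percolating in $\ex{\EE}$, and then identify $\ex{\EE}/\VV\simeq\Mod\A/\smod\A$ using the torsion pair in $\oLCAA=\LCAA/\LCAAf$. The torsion pair is used to construct and compare functors (in particular to make the discrete-part functor conflation-exact after passing to $\Mod\A/\smod\A$), not to directly generate a fiber sequence. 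Your plan also overlooks that $\EE$ is only one-sided exact, so the exact hull step cannot be skipped. Finally, even if your sequence existed, its shape $K(\LCAAR)\to K(\LCAA)\to K(\text{quotient})$ does not match the theorem's $K(\smod\A)\to K(\smod A_\bR)\to K(\LCAA)$; the required rearrangement is precisely what the comparison with $\smod\A\to\Mod\A\to\Mod\A/\smod\A$ accomplishes.
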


For localizing invariants and $\StabInfCat$ we refer to the framework and notation of \cite{MR3070515}.  The principal example is non-connective $K$-theory taking values in spectra (and we will indicatively always denote the invariant by $K$ throughout the paper).  When convenient, and for example in the above statement, we write $K(\mathsf{C})$ even if $\mathsf{C}$ is an exact (or one-sided exact) category, for $K(\Dbinf(\mathsf{C}))$, where $\Dbinf(\mathsf{C})$ is the stable $\infty$-category of bounded complexes attached to $\mathsf{C}$.

The first theorem of the above kind is due to Clausen \cite[Theorem 3.4]{clausen}, who proved it in the special case $A=\mathbb{Q}$ and $\mathfrak{A}=\mathbb{Z}$ (with an additional, but ultimately inconsequential, restriction to second-countable topologies) with an eye to applications in class field theory. To this end, he set up a cone construction on the level of stable $\infty$-categories. The above version stems from \cite[Theorem 11.4]{Braunling18}. It was based on Schlichting's Localization Theorem from \cite{Schlichting04}.  In our new approach, we use the recent Localization Theorem of \cite{HenrardVanRoosmalen19b, HenrardVanRoosmalen19a}, which uses the additional flexibility of one-sided exact categories. These devices can be thought of as convenient tools to avoid having to handle the underlying stable $\infty$-categories (or triangulated categories) manually.

The proof of the main theorem is given in \S\ref{section:ProofOfMainTheorem}.  We start by considering the quotient of $\LCAA$ by the subcategory of compact modules $\LCAAC.$  While this subcategory does not satisfy the $s$-filtering conditions of Schlichting's Localization Theorem, it does satisfy the conditions of the recent Localization Theorem of \cite{HenrardVanRoosmalen19b, HenrardVanRoosmalen19a}.  The latter theory endows the quotient $\EE \coloneqq \LCAA / \LCAAC$ with the structure of a one-sided exact category (in the sense of \cite{BazzoniCrivei13, Rump11}), which can then canonically be embedded in its exact hull $\ex{\EE}$.  It is from this exact hull that we take a further quotient, this time by the subcategory $\VV$ of vector modules of $\LCAA$.  Finally, we show that the resulting category $\FF\coloneqq \ex{\EE}/ \VV$ is equivalent to $\Mod \mathfrak{A} / \mod \mathfrak{A}$; it is from this equivalence that we obtain the sequence in Theorem \ref{theorem:MainTheorem}.

The equivalence $\FF \simeq \Mod \mathfrak{A} / \mod \mathfrak{A}$ is based on the universal properties of the aforementioned quotients and the exact hull, as well as on the following observation (see Theorem \ref{theorem:TorsionPair}): after quotienting the finite modules out, the Structure Theorem of $\LCAA$ (Theorem \ref{theorem:StructureTheoremOfLocallyCompactModules}) implies that the pair $(\LCAACV, \LCAAD)$ becomes a torsion pair.  This means that the sequences ``$M_{\textrm{compact}} \oplus M_{\textrm{vector}} \inflation M \deflation M_{\textrm{discrete}}$'' given by the Structure Theorem become essentially unique in the quotient.

Using these new tools, our proof of Theorem \ref{theorem:MainTheorem} is considerably shorter and technically less involved.  We never seriously leave the world of $1$-categories, do not use the $\infty$-categorical cone construction of \cite{clausen}, nor do we need the tedious verifications of Schlichting's left/right $s$-filtering conditions done in \cite{Braunling18}.
\section{Localizations of exact categories}\label{section:Preliminaries}

This section is preliminary in nature.  We summarize the results of \cite{HenrardVanRoosmalen19b,HenrardVanRoosmalen19a} about localizations of exact categories.  One salient feature of the localizations we consider is that the resulting category need not be exact, but will be one-sided exact (in the sense of \cite{BazzoniCrivei13, Rump11}).

\subsection{One-sided exact categories}

\begin{definition}
	A \emph{conflation category} is an additive category $\CC$ together with a chosen class of kernel-cokernel pairs (closed under isomorphisms), called \emph{conflations}. The kernel part of a conflation is called an \emph{inflation} and the cokernel part of a conflation is called a \emph{deflation}. We depict inflations by $\inflation$ and deflations by $\deflation$.
	
	An additive functor $F\colon \CC\to \DD$ between conflation categories is called \emph{conflation-exact} if conflations are mapped to conflations.
\end{definition}

\begin{definition}\label{Definition:OneSidedExactCategory}
	A conflation category $\EE$ is called an \emph{inflation-exact} or \emph{left exact category} if $\EE$ satisfies the following axioms:
	\begin{enumerate}[label=\textbf{L\arabic*},start=0]
		\item\label{L0} For each $X\in \EE$, the map $0\to X$ is an inflation.
		\item\label{L1} The composition of two inflations is again an inflation.
		\item\label{L2} The pushout of any morphism along an inflation exists, moreover, inflations are stable under pushouts.
	\end{enumerate}
	Dualizing the above axioms yields the notion of a \emph{deflation-exact} category. A \emph{Quillen exact} category is simply a conflation category which is both inflation-exact and deflation-exact by \cite[Appendix~A]{Keller90}.
\end{definition}

Let $\EE$ be one-sided exact category. Analogous to exact categories, one can define the bounded derived category $\Db(\EE)$ as the Verdier localization $\Kb(\EE)/\left\langle\Ac(\EE)\right\rangle_{\text{thick}}$ of the bounded homotopy category by the thick closure of the triangulated subcategory of acyclic complexes (see \cite[Corollary~7.3]{BazzoniCrivei13}). The canonical embedding $i\colon\EE\to\Db(\EE)$, mapping objects to stalk complexes in degree zero, is a fully faithful embedding mapping conflations to triangles.
We write $\Dbinf(\EE)$ for the corresponding stable $\infty$-category; in particular, the homotopy category of $\Dbinf(\EE)$ recovers $\Db(\EE)$.

The derived category allows a construction of the \emph{exact hull} $\ex{\EE}$ of $\EE$ (see \cite{HenrardVanRoosmalen19b}): the exact hull is given by the extension closure of $\EE$ in the (bounded) derived category $\Db(\EE)$.  A sequence $X\to Y \to Z$ in $\ex{\EE}$ is a conflation if and only if it fits in a triangle $X \to Y \to Z\to \Sigma X.$  The following proposition is shown in \cite{HenrardVanRoosmalen19b}.

\begin{proposition}\label{proposition:ExactHull}
Let $j\colon \EE \to \ex{\EE}$ be the embedding of an inflation-exact category in its exact hull.
\begin{enumerate}
	\item The embedding $j\colon \EE\to \ex{\EE}$ is $2$-universal among conflation-exact functors to exact categories.
	\item The embedding $j$ lifts to an equivalence $\Dbinf(\EE)\stackrel{\simeq}{\rightarrow}\Dbinf(\ex{\EE})$ of stable $\infty$-categories.
\end{enumerate}
\end{proposition}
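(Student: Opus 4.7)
The plan is to handle both parts via the derived category $\Db(\EE)$, exploiting the construction of $\ex{\EE}$ as the extension closure of $\EE$ inside $\Db(\EE)$. Recall that conflations in $\ex{\EE}$ are, by definition, the short exact sequences $X\to Y\to Z$ fitting into a triangle $X\to Y\to Z\to \Sigma X$ in $\Db(\EE)$. In particular, the embedding $j\colon \EE \to \ex{\EE}$ is automatically conflation-exact, and the whole extension closure is built from $\EE$ by finitely iterated extensions.

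For part (2), I would first note that the conflation-exact embedding $j$ induces a triangulated functor $\Db(j)\colon \Db(\EE) \to \Db(\ex{\EE})$, and likewise a functor $\Dbinf(j)$ on stable $\infty$-enhancements. For essential surjectivity, since every object of $\ex{\EE}$ is an iterated extension of objects of $\EE$, a standard induction on extension length shows that every object of $\Db(\ex{\EE})$ lies in the triangulated subcategory generated by $\EE$, and so is hit by $\Db(j)$. For fully faithfulness, the key step is to show that morphisms in $\Db(\ex{\EE})$ between bounded complexes with entries in $\EE$ agree with morphisms in $\Db(\EE)$; this amounts to checking that a quasi-isomorphism in $\Kb(\ex{\EE})$ between such complexes can be refined (via the calculus of fractions) to one in $\Kb(\EE)$, using that any deflation in $\ex{\EE}$ from an object of $\EE$ can be rewritten as a two-step filtration by $\EE$-conflations.

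For part (1), given a conflation-exact functor $F\colon \EE \to \AA$ with $\AA$ exact, my plan is to extend $F$ by passing through derived categories. The functor $F$ lifts canonically to $\Kb(F)\colon \Kb(\EE) \to \Kb(\AA)$ and, since $F$ is conflation-exact, it sends acyclic complexes to acyclic complexes and hence descends to a triangulated functor $\Db(F)\colon \Db(\EE) \to \Db(\AA)$. Because $\AA$ is extension-closed in $\Db(\AA)$, the restriction of $\Db(F)$ to the extension closure $\ex{\EE} \subset \Db(\EE)$ factors through $\AA$, giving the sought extension $\tilde F \colon \ex{\EE} \to \AA$. By construction $\tilde F$ sends triangles in $\ex{\EE}$ to triangles with vertices in $\AA$, i.e.\ to conflations, and it recovers $F$ on $\EE$. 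For the $2$-universal property, any other conflation-exact extension $\tilde F'$ agrees with $F$ on $\EE$, so $\tilde F$ and $\tilde F'$ agree on each generating class of extensions; induction on extension length produces a natural isomorphism $\tilde F \simeq \tilde F'$, with uniqueness up to unique natural isomorphism following from the rigidity of extensions of exact functors.

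The main obstacle I expect is establishing the full faithfulness in part (2), since the derived category of a merely one-sided exact category is less well-behaved than the classical exact case: one has to invoke the description of $\Db(\EE)$ via the thick closure of $\Ac(\EE)$ (rather than just the localization at acyclic complexes), and verify that the thick subcategories of acyclics in $\Kb(\EE)$ and $\Kb(\ex{\EE})$ correspond under the embedding. Once this is done, the $\infty$-categorical refinement is automatic, as $\Dbinf$ is functorial in conflation-exact functors and the induced map on homotopy categories is already an equivalence of triangulated categories.
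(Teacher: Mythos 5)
The paper does not prove this proposition itself; it imports it verbatim from \cite{HenrardVanRoosmalen19b}. So there is no in-paper argument to compare against, and I will assess your sketch on its own merits.

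Your overall plan is sensible and, as far as I can tell, aligned with the derived-category point of view that the reference uses: $\ex{\EE}$ lives inside $\Db(\EE)$ by construction, and both claims should fall out of an analysis of the induced functor on bounded derived categories. Essential surjectivity of $\Db(j)$ by induction on extension length is fine. The construction of $\tilde F$ in part (1) by restricting $\Db(F)$ to $\ex{\EE}\subset\Db(\EE)$ and landing in $\AA$ because $\AA$ is extension-closed in $\Db(\AA)$ is a clean idea, though you should flag that extension-closedness of an exact category inside its bounded derived category is itself a (standard but nontrivial) fact, and the 2-universality clause needs the usual diagram-chasing with naturality that ``rigidity of extensions'' waves at rather than proves.

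The genuine gap is in the fully-faithfulness half of part (2), and you have in fact put your finger on it in your final paragraph without resolving it. For a one-sided exact category, $\Db(\EE)$ is $\Kb(\EE)/\langle\Ac(\EE)\rangle_{\mathrm{thick}}$, a quotient by a \emph{thick closure}, not a localization at quasi-isomorphisms; the morphisms inverted are those whose cone lies in this thick closure, and these are not directly describable as quasi-isomorphisms in the usual sense. Consequently the proposed mechanism---``refine a quasi-isomorphism in $\Kb(\ex{\EE})$ to one in $\Kb(\EE)$ via calculus of fractions''---does not apply as stated: one must first compare $\langle\Ac(\EE)\rangle_{\mathrm{thick}}\subset\Kb(\EE)$ with $\langle\Ac(\ex{\EE})\rangle_{\mathrm{thick}}\subset\Kb(\ex{\EE})$, and this is precisely where the work lives. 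In particular the auxiliary claim you invoke, that ``any deflation in $\ex{\EE}$ from an object of $\EE$ can be rewritten as a two-step filtration by $\EE$-conflations,'' is neither obviously true nor obviously the right lever: a deflation in $\ex{\EE}$ has its kernel in $\ex{\EE}$, not in $\EE$, and an arbitrary conflation of $\ex{\EE}$ need not restrict to one of $\EE$ in any naive sense. As written, the proof of full faithfulness is a plan, not an argument; you would need to either establish a concrete dévissage comparing the two thick subcategories, or argue directly that the fully faithful inclusion $\ex{\EE}\hookrightarrow\Db(\EE)$ exhibits $\Db(\EE)$ as satisfying the universal property of $\Db(\ex{\EE})$.
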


\subsection{Strictly percolating subcategories}

\begin{definition}\label{Definition:InflationPercolating}
	Let $\EE$ be an exact category. A full subcategory $\AA\subseteq \EE$ is called a \emph{strictly inflation-percolating subcategory} if the following properties are satisfied:
\begin{enumerate}[label=\textbf{A\arabic*},start=1]
		\item\label{A1} The category $\AA$ is a \emph{Serre subcategory} of $\EE$, i.e.~for any conflation $X\inflation Y\deflation Z$ in $\EE$, we have $Y\in \AA\Leftrightarrow X,Z\in \AA$. 
		\item\label{A2} Every morphism $f\colon A\to X$ with $A\in \AA$ is \emph{strict}, i.e.~factors as $A\deflation \im(f)\inflation X$, and $\im(f)\in \AA$.
	\end{enumerate}
\end{definition}

\begin{remark}
If $\AA\subseteq \EE$ is a strictly inflation-percolating subcategory, then $\AA$ is a fully exact abelian subcategory of $\EE$.
\end{remark}

The following observation will be of use later.

\begin{proposition}\label{proposition:A1A2+InjectiveLiftsToHull}
Let $\EE$ be an inflation-exact category and let $\VV\subseteq \EE$ be a full additive subcategory satisfying axioms \ref{A1} and \ref{A2}. If every object of $\VV$ is injective in $\EE$, then $\VV$ is a strictly inflation-percolating subcategory of the exact hull $\ex{\EE}$.
\end{proposition}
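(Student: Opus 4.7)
The plan is to verify axioms \ref{A1} and \ref{A2} for $\VV$ inside $\ex{\EE}$, using the given axioms in $\EE$ together with the injectivity hypothesis. A key preliminary is that every $V\in\VV$ remains injective in $\ex{\EE}$: Proposition~\ref{proposition:ExactHull}(2) gives $\Ext^1_{\ex{\EE}}(A,V)=\Ext^1_{\EE}(A,V)=0$ for $A\in\EE$, and applying the long exact sequence to any conflation $X_1\inflation X\deflation X_2$ with $X_1,X_2\in\EE$ (which exists since $\ex{\EE}$ is the extension closure of $\EE$ in $\Db(\EE)$) extends this to $\Ext^1_{\ex{\EE}}(X,V)=0$ for every $X\in\ex{\EE}$.

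I would verify \ref{A2} first. Given $f\colon V\to X$ with $V\in\VV$ and $X\in\ex{\EE}$, fix $X_1\inflation X\overset{\pi}{\deflation}X_2$ with $X_i\in\EE$. Applying \ref{A2} in $\EE$ to the composite $\pi f$ yields $V\overset{p}{\deflation}I_2\overset{j}{\inflation}X_2$ with $I_2\in\VV$, and Serre gives $K\coloneqq\ker(p)\in\VV$. Since $\pi\circ f|_K=0$, the restriction $f|_K$ factors through $X_1$, producing a morphism $K\to X_1$ in $\EE$; a second application of \ref{A2} in $\EE$ yields $K\deflation I_K\inflation X_1$ with $I_K\in\VV$. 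Form the pushout $V'\coloneqq V\cup_K I_K$ in $\EE$: this produces a conflation $I_K\inflation V'\deflation I_2$ (whence $V'\in\VV$ by Serre), a deflation $V\deflation V'$, and by the pushout property a canonical morphism $V'\to X$ extending both $f$ and the composite $I_K\inflation X_1\inflation X$. The resulting diagram
\[
\xymatrix{I_K \ar@{>->}[r] \ar@{>->}[d] & V' \ar@{->>}[r] \ar[d] & I_2 \ar@{>->}[d] \\ X_1 \ar@{>->}[r] & X \ar@{->>}[r] & X_2}
\]
is a morphism of conflations whose outer verticals are inflations, so the standard inflation five-lemma in the exact category $\ex{\EE}$ makes the middle vertical $V'\to X$ an inflation; this gives the required strict factorization $f=V\deflation V'\inflation X$ with image in $\VV$.

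For \ref{A1}, let $X\inflation Y\deflation Z$ be a conflation in $\ex{\EE}$. If $X,Z\in\VV$, injectivity of $X$ splits the conflation and $Y\cong X\oplus Z\in\VV$. Conversely, suppose $Y\in\VV$. The already-established \ref{A2} applied to the deflation $Y\to Z$ produces a factorization $Y\deflation I\inflation Z$ with $I\in\VV$; but in an exact category a deflation followed by an inflation can be a deflation only if the inflation is an isomorphism, so $Z\cong I\in\VV$. Then $Y\to Z$ is a morphism in $\EE$ between $\VV$-objects, and the same degeneracy argument applied with \ref{A2} in $\EE$ shows that it is already a deflation in $\EE$; its kernel $X$ therefore lies in $\VV$ by \ref{A1} in $\EE$.

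The main obstacle is \ref{A2}. Constructing the image of an arbitrary $f\colon V\to X$ requires applying \ref{A2} in $\EE$ twice—once in the ``quotient direction'' $\pi f$ and once in the ``kernel direction'' $f|_K$—and fusing the outputs via the pushout $V\cup_K I_K$ in $\EE$. The delicate point is to confirm that this pushout embeds into $X$ as an inflation in $\ex{\EE}$, which is where the inflation five-lemma and the preliminary lifting of injectivity come together. Once \ref{A2} is in hand, \ref{A1} follows almost formally: injectivity handles one direction, and the deflation-then-inflation degeneracy together with \ref{A1} in $\EE$ handles the other.
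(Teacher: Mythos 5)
Your overall strategy (lift injectivity to $\ex{\EE}$, verify \ref{A2} by reducing along a conflation, combine the two half-factorizations by a pushout and an inflation five-lemma, then deduce \ref{A1}) is the same as the paper's, but there is a genuine gap at the start of both the injectivity-lift and the \ref{A2} verification: you assert that an arbitrary $X\in\ex{\EE}$ sits in a conflation $X_1\inflation X\deflation X_2$ with $X_1,X_2\in\EE$, ``since $\ex{\EE}$ is the extension closure of $\EE$.'' This is not what extension closure gives you. Writing $\ex{\EE}=\bigcup_{n\geq 0}\EE_n$ with $\EE_0=\EE$ and $\EE_n$ the extensions of objects of $\EE_{n-1}$, a general $X$ only admits a conflation with endpoints in some $\EE_{n-1}$, which need not themselves lie in $\EE$.

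For the injectivity lift this is a cosmetic slip: the class of $X$ with $\Ext^1_{\ex{\EE}}(X,V)=0$ is closed under extensions by the long exact sequence and contains $\EE$, hence contains its extension closure $\ex{\EE}$; no explicit conflation with endpoints in $\EE$ is needed. But for \ref{A2} the gap is substantive. Your argument applies \ref{A2} \emph{in $\EE$} to $\pi f\colon V\to X_2$ and then again to the induced map $K\to X_1$, which requires $X_1,X_2\in\EE$. As written, this only handles $X\in\EE_1$. The fix is to run an induction on $n$: at the inductive step, $X_1,X_2\in\EE_{n-1}$, and you invoke the inductive hypothesis (not \ref{A2} in $\EE$) for both half-factorizations. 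This is exactly what the paper does. Once that is inserted, your pushout construction and the appeal to the inflation five-lemma (the paper cites \cite[Corollary~3.2]{Buhler10} at the same spot) are correct, and your derivation of \ref{A1} goes through — though the phrase ``the same degeneracy argument applied with \ref{A2} in $\EE$ shows that it is already a deflation in $\EE$'' deserves a sentence noting that $Y\to Z$ is \emph{a priori} only a deflation in $\ex{\EE}$, so the comparison of the two image factorizations should be carried out in $\ex{\EE}$ using full faithfulness and conflation-exactness of $\EE\hookrightarrow\ex{\EE}$.
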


\begin{proof}
As each $V\in \VV\subseteq \EE$ is injective, $\Hom_{\EE}(-,V)$ is exact and hence $\Ext_\EE^1(-,V)= 0.$  As $\ex{\EE}$ is the extension-closure of $\EE$ in $\ex{\EE}$, it follows that $V$ is injective in $\ex{\EE}$ as well.
	
	Note that $\ex{\EE}=\bigcup_{n\geq 0}\EE_n$ where $\EE_0=\EE$ and $\EE_n$ for $n\geq 1$ is defined recursively as extensions of objects in $\EE_{n-1}$. As $\VV\subseteq \ex{\EE}$ consists of injective objects, it follows that $\VV\subseteq \ex{\EE}$ is an extension-closed subcategory.
	
	We now show axiom \ref{A2}. Let $f\colon V\to Y$ be a map in $\ex{\EE}$ with $V\in \VV$. By definition, there is an $n$ such that $Y\in \EE_{n}$. We proceed by induction on $n\geq 0$. If $n=0$, the result follows as $\VV\subseteq \EE$ satisfies axiom \ref{A2}. If $n\geq 1$, then there is a conflation $X\stackrel{\iota}{\inflation} Y\stackrel{\rho}{\deflation} Z$ in $\ex{\EE}$ with $X,Z\in \EE_{n-1}$. By the induction hypothesis, the composition $\rho\circ f$ factors as $V\stackrel{p}{\deflation}V''\stackrel{h}{\inflation}Z$ with $V''\in \VV$. As $\VV\subseteq\ex{\EE}$ is an abelian subcategory, $(V' \coloneqq) \ker(p)\in \VV$. Note that there is an induced map $g\colon V'\to X$ such that $fi=\iota g$. Again the induction hypothesis yields that $g$ factors as $V'\stackrel{g'}{\deflation}U\stackrel{g''}{\inflation}X$ with $U\in \VV$. Taking the pushout of $g'$ along $i$ in $\ex{\EE}$ yields the following commutative diagram (where the rows are conflations):
	\[\xymatrix{
		V'\ar@{>->}[r]^i\ar@{->>}[d]^{g'} & V\ar@{->>}[r]^{p}\ar@{->>}[d]^{f'} & V''\ar@{=}[d]\\
		U\ar@{>->}[r]^{i'}\ar@{>->}[d]^{g''} & W\ar@{->>}[r]^{p'}\ar[d]^{f''} & V''\ar@{>->}[d]^h\\
		X\ar@{>->}[r]^{\iota} & Y\ar@{->>}[r]^{\rho} & Z
	}\] Here the upper-left square is bicartesian, $f''f'=f$ and $W\in \VV$ as $\VV\subseteq\ex{\EE}$ is extension-closed. It follows from \cite[Corollary~3.2]{Buhler10} that $f''$ is an inflation in $\ex{\EE}$. This shows axiom \ref{A2}.
	
	To show axiom \ref{A1}, it remains to show that given a conflation $X\stackrel{\iota}{\inflation}V \stackrel{\rho}{\deflation} Z$ in $\ex{\EE}$ with $V\in\VV$, $X,Z$ belong to $\VV$ as well. By axiom \ref{A2}, $\rho$ is admissible with image in $\VV$.  It follows that $Z\in \VV$. As $X$ is the kernel of a morphism in $\VV$ and $\VV$ is an abelian subcategory of $\EE$, we know that $X$ belongs to $\VV$ as well. This concludes the proof.
\end{proof}

\subsection{Quotients by strictly percolating subcategories}

The next definition is based on \cite[Definition~1.12]{Schlichting04}.

\begin{definition}
	Let $\EE$ be an inflation-exact category and let $\AA\subseteq\EE$ be a strictly inflation-percolating subcategory. A morphism $f\colon X\to Y$ in $\EE$ is called a \emph{weak $\AA$-isomorphism} (or simply a \emph{weak isomorphism}) if $f$ is strict and $\ker(f),\coker(f)\in \AA$. The set of weak $\AA$ -isomorphisms is denoted by $S_{\AA}$.
\end{definition}

The following theorem summarizes the main results of \cite{HenrardVanRoosmalen19b,HenrardVanRoosmalen19a}.

\begin{theorem}\label{theorem:Quotient/LocalizationTheorem}
	Let $\AA$ be a strictly inflation-percolating subcategory of an exact category $\EE$.
	\begin{enumerate}
		\item The set $S_{\AA}$ of weak $\AA$-isomorphisms is a left multiplicative system.
		\item The natural localization functor $Q\colon\EE\to \EE[S^{-1}_\AA]$ endows the localization $\EE[S^{-1}_\AA]$ with an inflation-exact structure such that $Q$ preserves and reflects conflations.
		\item  The localization functor $Q$ is also a quotient in the category of conflation categories, i.e.~it satisfies the following universal property: if $F\colon \EE\to \FF$ is a conflation-exact functor between conflation categories such that $F(\AA)=0$, then $F$ factors uniquely through $Q$ via a conflation-exact functor $F'\colon \EE/\AA=\EE[S_\AA^{-1}]\to \FF$.
	\end{enumerate}
	
	Moreover, the localization sequence $\AA\hookrightarrow\EE \stackrel{Q}{\rightarrow}\EE/\AA$ induces a Verdier localization sequence on the bounded derived categories
	\[\DAb(\EE)\to \Db(\EE)\to \Db(\EE/\AA)\]
	where $\DAb(\EE)$ is the thick subcategory of $\Db(\EE)$ generated by $\AA$ under the canonical embedding $\EE\hookrightarrow \Db(\EE)$. 
	
	If $\AA$ has enough $\EE$-injectives, then the natural embedding $\Db(\AA)\hookrightarrow \DAb(\EE)$ is a triangle equivalence, and there is an exact sequence in $\StabInfCat$:
	\[\Dbinf(\AA)\to \Dbinf(\EE)\to \Dbinf(\EE/\AA).\]
\end{theorem}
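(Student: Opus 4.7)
My approach is to establish the three numbered assertions first, using axioms \ref{A1}--\ref{A2} on $\AA$ together with the exact-category structure of $\EE$, and then bootstrap them to the derived-category statements via a calculus of left fractions at the homotopy-category level.

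For (1), I would verify that $S_\AA$ is a left multiplicative system. The key structural observation is that every weak isomorphism $f\colon X\to Y$ factors as $X\deflation \im(f)\inflation Y$, with each factor again a weak isomorphism (the deflation has kernel $\ker(f)\in\AA$, the inflation has cokernel $\coker(f)\in\AA$); this reduces closure under composition, via \ref{L1} and the Serre property \ref{A1}, to a handful of cases. For the left Ore condition, given $s\colon X\to X'$ in $S_\AA$ and $f\colon X\to Y$, I would split $s$ into its inflation and deflation parts: on the inflation part the pushout axiom \ref{L2} yields the desired Ore square directly, while on the deflation part axiom \ref{A2} allows the induced map out of $\ker(s)\in \AA$ to be strictly factored, producing the complementary half of the fraction. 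The weak cancellation condition is handled by similar considerations.

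For (2) and (3), I would declare a sequence in $\EE[S_\AA^{-1}]$ to be a conflation precisely when it is isomorphic to the image of an $\EE$-conflation. The verification of \ref{L0}--\ref{L2} in the localization then amounts to realizing each pushout in $\EE[S_\AA^{-1}]$ by an honest pushout in $\EE$ up to a left fraction, which is possible by combining the Ore condition from (1) with \ref{L2} in $\EE$; that $Q$ both preserves and reflects conflations is immediate from this definition. Part (3) is then formal: any conflation-exact $F\colon \EE\to \FF$ with $F(\AA)=0$ annihilates the kernels and cokernels of weak isomorphisms, hence sends $S_\AA$ to isomorphisms in $\FF$, and therefore factors uniquely through $Q$.

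The Verdier sequence on bounded derived categories is obtained by extending $Q$ to $\Kb(\EE)$ and identifying the kernel of the induced triangulated functor $\Db(\EE)\to \Db(\EE/\AA)$ with $\DAb(\EE)$. The principal obstacle—and where the left multiplicative system from (1) does the essential work—is showing that every Verdier roof in $\Db(\EE/\AA)$ can be lifted to a roof of weak $\AA$-isomorphisms in $\Kb(\EE)$, so that $\Db(\EE)\to \Db(\EE/\AA)$ satisfies the universal property of a Verdier quotient. Finally, under the hypothesis that $\AA$ has enough $\EE$-injectives, a standard resolution argument—replacing each complex in $\DAb(\EE)$ by a quasi-isomorphic complex of $\EE$-injectives from $\AA$—yields the equivalence $\Db(\AA)\simeq \DAb(\EE)$. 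Since the $\infty$-categorical enhancement $\Dbinf$ is functorial with respect to Verdier quotients, the triangulated sequence lifts to the claimed exact sequence in $\StabInfCat$ within the framework of \cite{MR3070515}.
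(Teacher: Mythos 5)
The paper itself gives no proof of this theorem: it is stated explicitly as ``the main results of \cite{HenrardVanRoosmalen19b,HenrardVanRoosmalen19a}'' and is simply cited. There is therefore no internal argument in the paper to compare your sketch against; I am evaluating it on its own as a reconstruction of the cited proofs.

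As an outline, your strategy---factoring weak isomorphisms through their images, checking the left Ore conditions via \ref{L2} and \ref{A2}, defining conflations in $\EE[S^{-1}_\AA]$ as images of $\EE$-conflations, passing to homotopy categories for the Verdier sequence, and invoking injective resolutions for $\Db(\AA)\simeq\DAb(\EE)$---is a plausible route and almost certainly tracks the cited references at a high level. But one claim is not justified and conceals real work: you assert that with conflations defined as (isomorphs of) images of $\EE$-conflations, it is ``immediate from this definition'' that $Q$ both preserves and reflects conflations. Preservation is indeed definitional, but reflection is not. If $X\to Y\to Z$ is a kernel--cokernel pair in $\EE$ whose image in $\EE[S^{-1}_\AA]$ happens to be isomorphic (via roofs) to the image of some \emph{other} $\EE$-conflation, you still have to prove that $X\to Y\to Z$ was itself a conflation in $\EE$. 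This requires unwinding the fraction isomorphisms and using the strictness axiom \ref{A2} and the Serre property \ref{A1}; it does not follow by definition-chasing. Similarly, the identification of $\ker\bigl(\Db(\EE)\to\Db(\EE/\AA)\bigr)$ with $\DAb(\EE)$, which you correctly flag as the technical heart, is left entirely implicit---the cited papers spend considerable effort precisely there, and a reader of your sketch would not know what needs to be done.
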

\section{Structure theory of locally compact modules over an order}

Let $\LCA$ be the exact category of locally compact abelian groups, cf.~\cite{MR2329311}. Let $A$ denote a finite-dimensional semisimple $\mathbb{Q}$-algebra and $\mathfrak{A}\subset A$ is a $\mathbb{Z}$-order, i.e.~a subring of $A$ which is a finitely generated $\mathbb{Z}$-module such that $\mathbb{Q}\cdot \mathfrak{A}=A$. In this section, we have a closer look at the category $\LCAA$ of locally compact right $\A$-modules.

\begin{definition}\label{Definition:LocallyCompactModulesOverOrder}
	We define the category $\LCAA$ of locally compact right modules over $\mathfrak{A}$ as follows:
		\begin{enumerate}
			\item An object $M\in \LCAA$ is a right $\mathfrak{A}$-module such that the additive group $(M,+)\in \LCA$ is a locally compact group and such that right multiplication by any $\alpha\in M$ is a continuous endomorphism of $(M,+)$.
			\item A morphism $f\colon M\to N$ is a continuous right $\mathfrak{A}$-module map.
		\end{enumerate}
\end{definition}

The following theorem is standard, see \cite[Proposition~3.4 and Lemma~3.6]{Braunling18} (based on the earlier \cite{MR2329311}).

\begin{theorem}
The category $\LCAA$ is a quasi-abelian category; the inflations are given by closed injections and deflations are given by open surjections.
\end{theorem}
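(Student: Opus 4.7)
The plan is to reduce the statement to the known case $\A = \bZ$, where $\LCAA = \LCA$ is quasi-abelian by \cite{MR2329311}. The key tool is the forgetful functor $U \colon \LCAA \to \LCA$, which is faithful and additive; the whole argument consists in showing that $U$ creates kernels, cokernels, and the classes of closed injections and open surjections, from which the quasi-abelian axioms will descend automatically.

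First I would check that $\LCAA$ is additive and has all kernels and cokernels. Biproducts are transferred from $\LCA$ with the componentwise $\A$-action. For the kernel of $f \colon M \to N$, the set-theoretic kernel $\ker f \subseteq M$ is closed (continuity of $f$) and $\A$-stable ($\A$-linearity of $f$), hence is locally compact with the subspace topology; applying $U$ verifies the universal property in $\LCAA$. For cokernels, the essential point is that $\overline{\mathrm{im}\, f} \subseteq N$ is a closed $\A$-submodule: $\mathrm{im}\, f$ is $\A$-stable by $\A$-linearity, and since right multiplication by each $\alpha \in \A$ is continuous, the closure is again $\A$-stable. The quotient $N / \overline{\mathrm{im}\, f}$ with the quotient topology then inherits a continuous $\A$-action and represents the cokernel in $\LCAA$.

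A morphism in an additive category with kernels and cokernels is a strict monomorphism (resp.\ strict epimorphism) iff it equals the kernel of its cokernel (resp.\ cokernel of its kernel). Since $U$ preserves and reflects kernels and cokernels by the previous step, $f$ is strict in $\LCAA$ iff $U(f)$ is strict in $\LCA$, and hence a closed injection (resp.\ open surjection) by the known identification in $\LCA$. Finally, the quasi-abelian stability axioms—pushouts of strict monomorphisms along arbitrary morphisms are strict monomorphisms, and pullbacks of strict epimorphisms are strict epimorphisms—are preserved and reflected by $U$ as well, because pullbacks and pushouts in $\LCAA$ are constructed by equipping the topological (co)limit with the canonical diagonal (resp.\ induced) $\A$-action. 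The main technical obstacle is verifying that right multiplication behaves well enough for the closure of an image to be $\A$-stable and for the quotient action to remain jointly continuous; once these topological checks are secured, every remaining step is routine transport of structure along the forgetful functor $U$.
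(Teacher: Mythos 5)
The paper itself offers no proof of this statement: it simply cites it as standard, referring to \cite[Proposition~3.4, Lemma~3.6]{Braunling18} and, behind that, \cite{MR2329311} for the case $\LCA$ of locally compact abelian groups. Your proof is a valid self-contained argument that fills in exactly the step the citation delegates, namely passing from $\LCA$ to $\LCAA$, and it does so cleanly by transport of structure along the forgetful functor $U\colon\LCAA\to\LCA$.

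The structure of your argument is sound: $U$ is additive, faithful, and conservative (an inverse of an $\A$-linear homeomorphism is automatically $\A$-linear); $U$ creates biproducts, kernels (closed $\A$-stable subgroups), and cokernels ($N/\overline{\im f}$, once you observe that the closure of an $\A$-stable subgroup is $\A$-stable under individually continuous $\A$-action, and that the quotient action is continuous because the quotient map is open). Hence $U$ creates all finite limits and colimits, so it preserves and reflects the canonical map $\coim f\to\im f$ and therefore preserves and reflects strictness. Combined with the identification of strict monomorphisms with closed injections and strict epimorphisms with open surjections in $\LCA$, this gives the description of inflations and deflations. For the two quasi-abelian stability axioms, you correctly reduce pushouts along strict monomorphisms (resp.\ pullbacks along strict epimorphisms) to cokernels of $(f,-g)\colon A\to B\oplus C$ (resp.\ kernels), which $U$ creates; the strictness of the resulting morphism then holds in $\LCAA$ because it holds in $\LCA$ and is reflected by $U$. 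This is a clean way to package the verifications and is essentially equivalent to, though more conceptually organized than, the direct topological checks one finds in the cited sources. No gap.
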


\begin{definition}
	We consider the following subcategories of $\LCAA$:
	\begin{enumerate}
		\item $\LCAAC$ denotes the full subcategory of compact $\mathfrak{A}$-modules.
		\item $\LCAAD$ denotes the full subcategory of discrete $\mathfrak{A}$-modules.
		\item $\LCAAR$ denotes the full subcategory of vector $\mathfrak{A}$-modules, i.e.~those $\mathfrak{A}$-modules whose underlying locally compact abelian group is isomorphic to $\mathbb{R}^n$ for some finite $n\geq 0$.
		\item $\LCAACV$ denotes the full subcategory of $\A$-modules which are a direct sum of a compact and a vector $\A$-module. 
		\item $\LCAAf$ denotes the full subcategory of finite $\A$-modules.
	\end{enumerate}
\end{definition}

The Structure Theorem for locally compact abelian groups extends to $\mathfrak{A}$-modules in the following sense (see \cite[Lemma~6.5]{Braunling18}).

\begin{theorem}[Structure Theorem for locally compact modules]\label{theorem:StructureTheoremOfLocallyCompactModules}
For each $M \in \LCAA$, there exists a (non-canonical) conflation
\[C_M\oplus V_M\stackrel{i_M}{\inflation} M \stackrel{p_M}{\deflation} D_M\]
	with $C_M\in \LCAAC, D_M\in \LCAAD$ and $V_M\in \LCAAR$.
\end{theorem}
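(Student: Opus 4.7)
The plan is to bootstrap the classical Pontryagin structure theorem for locally compact abelian groups to the $\A$-equivariant setting. The main obstacle is that the classical decomposition of an LCA group as vector-plus-compact-plus-discrete is non-canonical, so the chosen summands need not be preserved by the $\A$-action; one must exhibit a decomposition in which they are.

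First, I would produce an $\A$-stable compactly generated open submodule of $M$. Start with any compactly generated open subgroup $H_0 \subseteq M$, which exists by the classical structure theorem. Since $\A$ is finitely generated as a $\mathbb{Z}$-module, say by $\alpha_1,\ldots,\alpha_k$, replace $H_0$ by $H := H_0 + \alpha_1 H_0 + \cdots + \alpha_k H_0$. This is still open (it contains $H_0$), still compactly generated (a finite sum of compactly generated open subgroups), and now $\A$-stable by construction. The quotient $M/H$ is automatically a discrete $\A$-module.

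Next, I would isolate the vector and compact parts. The compactly generated LCA group $H$ is isomorphic as an LCA group to $\mathbb{R}^n \times \mathbb{Z}^m \times K$ for some compact $K$. The maximal vector subgroup $V_M \subseteq H$ admits a description using only the underlying topology of $H$ (for instance, as the largest closed divisible connected subgroup with no non-trivial compact subgroup), and since every vector subgroup of $M$ is connected, hence contained in every open subgroup, $V_M$ is in fact the maximal vector subgroup of $M$ itself. Being intrinsic to the topology of $M$, it is preserved by every continuous group endomorphism, and in particular by the $\A$-action, so $V_M \in \LCAAR$. Passing to $\overline{M} := M/V_M$, the image of $H$ is an open submodule containing a compact open subgroup $K'$; applying the same summing trick with $\alpha_1,\ldots,\alpha_k$ replaces $K'$ by an $\A$-stable compact open submodule $\overline{C}_M \subseteq \overline{M}$, with discrete quotient $D_M := \overline{M}/\overline{C}_M$.

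Finally, I would lift and split. Let $P_M \subseteq M$ be the preimage of $\overline{C}_M$ under $M \deflation \overline{M}$, giving a conflation $V_M \inflation P_M \deflation \overline{C}_M$ in $\LCAA$. The desired conflation $C_M \oplus V_M \inflation M \deflation D_M$ will follow once this splits, yielding $P_M \cong V_M \oplus C_M$ with $C_M \in \LCAAC$. The splitting is the step I expect to be the main subtlety: on the level of LCA groups it holds because $\mathbb{R}^n$ is injective in $\LCA$; to upgrade to the $\A$-equivariant setting one uses that a vector $\A$-module is naturally a module over the semisimple $\mathbb{R}$-algebra $A_\mathbb{R} = A\otimes_\mathbb{Q}\mathbb{R}$, so that vector $\A$-modules are injective objects of $\LCAA$, and consequently $\operatorname{Ext}^1_{\LCAA}(\overline{C}_M, V_M) = 0$.
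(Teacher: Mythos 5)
The paper does not prove this statement itself; it cites \cite[Lemma~6.5]{Braunling18}. So the comparison is with the standard argument from that source, which your proposal partly reconstructs.

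Your opening step is correct and is exactly the usual trick: starting from a compactly generated open subgroup $H_0$ and passing to $H = \sum_i \alpha_i H_0$ (with $\alpha_1,\ldots,\alpha_k$ a $\mathbb{Z}$-module generating set of $\mathfrak{A}$ containing $1$) yields an $\mathfrak{A}$-stable, open, compactly generated submodule. Your final step (a conflation $V_M \inflation P_M \deflation \overline{C}_M$ splits because vector $\mathfrak{A}$-modules are injective in $\LCAA$) is also fine — this is Theorem~\ref{theorem:VectorModulesAreBothInjectiveAndProjectiveInLCAA}.

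The middle step, however, has a genuine gap. You claim $H$ has a ``maximal vector subgroup'' that is \emph{intrinsic to the topology}, e.g.\ ``the largest closed divisible connected subgroup with no non-trivial compact subgroup'', and hence automatically $\mathfrak{A}$-stable. No such largest subgroup exists in general. Already $H = \mathbb{R}\times\mathbb{T}$ (which is compactly generated, with $n=1$, $m=0$, $K=\mathbb{T}$) contains two distinct closed vector subgroups, $\mathbb{R}\times\{1\}$ and the graph $\{(t,e^{2\pi i t}) : t\in\mathbb{R}\}$; both are closed, connected, divisible and torsion-free, neither contains the other, and the closed subgroup they generate is all of $\mathbb{R}\times\mathbb{T}$, which is not a vector group. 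So the vector direct summand in the isomorphism $H\cong\mathbb{R}^n\times\mathbb{Z}^m\times K$ is \emph{not} canonical, and there is no a priori $\mathfrak{A}$-stable choice. This is precisely the non-canonicity the theorem statement warns about, and it is what makes the $\mathfrak{A}$-equivariant upgrade non-trivial; your argument dissolves the difficulty by appealing to a canonicity that is not there.

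The standard fix is to extract the \emph{compact} part first, since it \emph{is} canonical. In $H\cong\mathbb{R}^n\times\mathbb{Z}^m\times K$, every compact subgroup projects trivially to both $\mathbb{R}^n$ (no non-trivial compact subgroups) and $\mathbb{Z}^m$ (torsion-free discrete), so $K$ is the unique maximal compact subgroup of $H$; it is therefore characteristic in $H$ and, since $H$ is $\mathfrak{A}$-stable, also $\mathfrak{A}$-stable. Set $C_M := K$. In $N := M/C_M$ the image of $H$ is an open subgroup isomorphic to $\mathbb{R}^n\times\mathbb{Z}^m$, whose identity component $N^0\cong\mathbb{R}^n$ is open in $N$; it is $\mathfrak{A}$-stable because identity components are characteristic. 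Thus $V_M := N^0$ is a vector $\mathfrak{A}$-module with $N/V_M$ discrete. Letting $P$ be the preimage of $V_M$ in $M$, the conflation $C_M \inflation P \deflation V_M$ splits since $V_M$ is projective in $\LCAA$ (or, dually, one can run your version: mod out by $V_M$ last and invoke injectivity). Either way one obtains the desired conflation $C_M\oplus V_M \inflation M \deflation D_M$. In short: your strategy works after reversing the order in which you peel off the compact and vector pieces, because only the compact piece admits the intrinsic characterization you need.
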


It is well known that vector groups are both injective and projective in $\LCA$ (see for example \cite[Corollary~3 to Theorem~3.3]{Moskowitz67}). This result extends to $\mathfrak{A}$-modules (see \cite[Theorem~5.13]{Braunling18}).

\begin{theorem}\label{theorem:VectorModulesAreBothInjectiveAndProjectiveInLCAA}
	The vector $\mathfrak{A}$-modules are both injective and projective in $\LCAA$.
\end{theorem}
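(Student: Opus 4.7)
The plan is to reduce the statement, via a standard adjunction with $\LCA$, to the fact that $\bR$ is both injective and projective in $\LCA$.

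The first step is to identify $\LCAAR$ with the category of finite-dimensional $A_\bR$-modules carrying their unique locally compact topology. Indeed, if $V \in \LCAAR$ has underlying LCA group $\bR^n$, then any continuous $\bZ$-linear endomorphism of $\bR^n$ is automatically $\bR$-linear, so the ring map $\A \to \End_{\LCA}(\bR^n) = M_n(\bR)$ extends first to $A = \bQ\cdot \A$ (since $\bR^n$ is uniquely divisible) and then to $A_\bR$. Since $A_\bR$ is a finite-dimensional semisimple $\bR$-algebra, every finite-dimensional $A_\bR$-module is a direct summand of $A_\bR^m$ for some $m \ge 0$. This reduces the theorem to showing that $A_\bR$ itself is both injective and projective in $\LCAA$.

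The key tool is the adjoint triple
\[
- \otimes_\bZ \A \;\dashv\; U \;\dashv\; \Hom_\bZ(\A,-),
\]
where $U\colon \LCAA \to \LCA$ is the forgetful functor. Since $\A$ is finitely generated and free of some rank $r$ as a $\bZ$-module, both outer functors really take values in $\LCAA$: on underlying LCA groups they are just $V \mapsto V^r$, and they carry the natural right $\A$-actions. Because the conflations of $\LCAA$ are by definition the closed-injection/open-surjection pairs, matching the description of conflations in $\LCA$, the functor $U$ is conflation-exact. Hence its left adjoint preserves projectives, and its right adjoint preserves injectives. Applying these to $\bR \in \LCA$, which is both projective and injective in $\LCA$ by the cited Moskowitz result, I conclude that $\bR \otimes_\bZ \A \cong A_\bR$ is projective in $\LCAA$ and that $\Hom_\bZ(\A, \bR) \cong \Hom_\bR(A_\bR, \bR) = (A_\bR)^{*}$ is injective in $\LCAA$.

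It then remains to upgrade the injectivity of $(A_\bR)^*$ to injectivity of $A_\bR$. This is purely algebraic: by semisimplicity of $A_\bR$, both $A_\bR$ and its $\bR$-linear dual $(A_\bR)^*$ are generators of the category of finite-dimensional $A_\bR$-modules, so $A_\bR$ is a direct summand of $((A_\bR)^*)^m$ for some $m$, and this decomposition is automatically topological (the topology on a finite-dimensional $A_\bR$-module being unique). I expect the main obstacle to be purely notational: carefully verifying that the tensor and Hom constructions really do produce well-defined objects in $\LCAA$ with the expected topology and that the two adjunctions hold at the level of continuous $\A$-linear maps; once set up, the Moskowitz theorem and the semisimplicity of $A_\bR$ close the argument.
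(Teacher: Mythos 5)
Your proof is correct, and it follows a natural route: reduce from arbitrary vector $\A$-modules to $A_\bR$, then use the induction/coinduction adjoint triple around the exact forgetful functor $U\colon\LCAA\to\LCA$ to transport Moskowitz's projectivity and injectivity of $\bR$ from $\LCA$. The paper does not give a proof here — it cites \cite[Theorem~5.13]{Braunling18} — so there is no proof in this document to compare against, but the argument you give is complete. Two points worth stating explicitly, which you gesture at but do not spell out: (a) $\A$ is a \emph{free} $\bZ$-module (being a finitely generated torsion-free subgroup of the $\bQ$-vector space $A$), which is what guarantees that $-\otimes_\bZ\A$ and $\Hom_\bZ(\A,-)$ preserve local compactness and indeed land in $\LCAA$; and (b) the abstract fact you invoke is that for an adjunction $L\dashv R$ between exact categories, exactness of $R$ forces $L$ to preserve projectives, and exactness of $L$ forces $R$ to preserve injectives — here $U$ sits in the middle of the triple and is conflation-exact, so both transfers apply. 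The final upgrade from injectivity of $\Hom_\bZ(\A,\bR)\cong (A_\bR)^*$ to injectivity of $A_\bR$ via semisimplicity (every faithful module over a semisimple algebra is a generator, epimorphisms split, and any algebraic direct-sum decomposition of finite-dimensional real vector spaces is automatically topological) is also correct.
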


The next lemma will be useful later.

\begin{lemma}\label{lemma:QuotientsAndDiscreteSubobjectsOfVectorModules}
	Let $X \in \LCAACV$, thus $X \cong C \oplus V$ with $C \in \LCAAC$ and $V \in \LCAAR.$
	\begin{enumerate}
	  \item If $X \inflation Y \deflation Z$ is a conflation with $Z \in \LCAAC$, then $Y \in \LCAACV.$
		\item If $f\colon X\deflation Y$ is a deflation, then $Y\in \LCAACV$.
		\item	If $g\colon Y\inflation X$ is an inflation such that $Y\in \LCAAD$, then $Y$ is a finitely generated $\mathfrak{A}$-module. 
	\end{enumerate}
\end{lemma}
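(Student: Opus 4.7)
My plan is to prove the three parts in order, since part (1) will feed directly into part (2).

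For (1), I would exploit that every vector module is injective in $\LCAA$ (Theorem \ref{theorem:VectorModulesAreBothInjectiveAndProjectiveInLCAA}). Composing the split inflation $V \inflation X$ with $X \inflation Y$ yields an inflation $V \inflation Y$, which by injectivity of $V$ splits, so $Y \cong V \oplus (Y/V)$. The quotient $Y/V$ then fits into a conflation $C \inflation Y/V \deflation Z$ with both ends compact, and since compactness is closed under extensions in $\LCAA$ (e.g.\ by the Structure Theorem applied to $Y/V$, or directly because compact is an extension-closed subcategory of $\LCA$), we conclude $Y/V \in \LCAAC$ and hence $Y \in \LCAACV$.

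For (2), I would apply the Structure Theorem (Theorem \ref{theorem:StructureTheoremOfLocallyCompactModules}) to $Y$ to obtain a conflation $C_Y \oplus V_Y \inflation Y \deflation D_Y$, and then show that $D_Y$ is finite. Consider the composition $X = C \oplus V \deflation Y \deflation D_Y$. Its restriction to $V$ has connected source and discrete Hausdorff target, so the image is a single point and hence zero as a group homomorphism; its restriction to $C$ has compact image in a discrete Hausdorff group, which must be finite. Thus the image of $X$, which equals $D_Y$ by surjectivity, is finite. In particular $D_Y \in \LCAAC$, and applying part (1) to the conflation above yields $Y \in \LCAACV$.

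For (3), let $g\colon Y \inflation X = C \oplus V$ with $Y \in \LCAAD$. The intersection $Y \cap C$ is closed in the compact module $C$ and also a subgroup of the discrete module $Y$, hence finite. Moreover, the projection $\pi_V\colon C \oplus V \to V$ is proper (preimages of compacts are of the form $C \times K$) and therefore closed, so its restriction to the closed subgroup $Y$ is a continuous closed group homomorphism $\pi_V|_Y\colon Y \to V$ with kernel $Y \cap C$. This descends to a continuous closed injection $Y/(Y \cap C) \hookrightarrow V$; since the quotient is already discrete and the injection is closed and continuous, it is a topological embedding, so $\pi_V(Y)$ is a discrete closed subgroup of $V \cong \mathbb{R}^n$, hence isomorphic to $\mathbb{Z}^r$ for some finite $r$. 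Therefore $Y$ is an extension of $\mathbb{Z}^r$ by a finite group, finitely generated over $\mathbb{Z}$ and thus also over $\mathfrak{A}$. The main technical hurdle is this topological argument in part (3), namely verifying that the image of $Y$ in $V$ is simultaneously closed and discrete; everything else reduces to simple manipulations with the Structure Theorem and injectivity of $V$.
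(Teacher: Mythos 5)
Your proofs of parts (1) and (2) follow essentially the same route as the paper: use injectivity of $V$ to split off a direct summand and reduce to extension-closure of compacts, and in (2) apply the Structure Theorem to $Y$ and show $D_Y$ is finite by noting the composite from $X$ kills $V$ (connectedness) and has finite image from $C$ (compact image in a discrete Hausdorff group). The paper packages the latter slightly differently via $\Hom(V,D)=0$ plus the direct-sum decomposition of the conflation, but the content is the same.

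For part (3), you take a genuinely different route. The paper simply cites the structure theory of closed subgroups of $\bR^n\oplus K$ ($K$ compact) from Morris's book (the Pontryagin dual of the classification of Hausdorff quotients of $\bR^n\oplus D$), reads off that any closed subgroup has the form $\bR^m\oplus\bZ^l\oplus(\text{finite})$, and then uses discreteness to force $m=0$. You instead re-derive the needed special case by hand: $Y\cap C$ is compact and discrete hence finite; the projection $\pi_V\colon C\oplus V\to V$ is proper hence closed, so its restriction to the closed subgroup $Y$ is closed, and the induced injection $Y/(Y\cap C)\hookrightarrow V$ is a closed embedding of a discrete group, giving a discrete subgroup of $\bR^n$, i.e.\ some $\bZ^r$; hence $Y$ is finitely generated. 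Your topological argument is correct (the transitivity of subspace topologies makes the ``compact and discrete'' claim for $Y\cap C$ unambiguous, properness of $\pi_V$ and closedness of $Y$ in $X$ give closedness of $\pi_V|_Y$, and a closed continuous injection is an embedding). The trade-off is the usual one: the paper's citation is shorter, while your version is self-contained and makes visible exactly which piece of locally compact structure theory is being used.
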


\begin{proof}
\begin{enumerate}
	\item As $V$ is injective, the conflation $X \inflation Y \deflation Z$ is a direct sum of conflations $V \inflation V \deflation 0$ and $C \inflation C' \deflation Z$.  Since $\LCAAC$ is closed under extensions, we find that $C'\in \LCAAC$, as required.
	\item Applying the Structure Theorem to $Y$ yields a conflation $C' \oplus V' \inflation Y \deflation D$, with $D \in \LCAAD.$  From the previous statement, we see that it suffices to show that $D \in \LCAAC$ (thus, $D$ is finite).  Write $h$ for the composition $X \deflation Y \deflation D$ and consider the conflation $\ker h \inflation X \deflation D.$  As $X \cong C \oplus V$ and $\Hom(V,D) = 0$, we see that this conflation is the direct sum of conflations $V \inflation V \deflation 0$ and $K \inflation C \deflation D.$  As $C$ is compact, we find that $D$ is compact as well. %
	\item It suffices to show that $Y$ is finitely generated as an abelian group.  As $Y$ is a closed subgroup of $X$, the Pontryagin dual of \cite[Chapter 2, Corollary~2 to Theorem 7]{Morris77} implies that $Y$ is of the form $\mathbb{R}^m\oplus \mathbb{Z}^l\oplus D$ with $D$ a finite (discrete) group. As $Y$ is discrete by assumption, we see that $Y$ is a finitely generated group.
		\qedhere
\end{enumerate}
\end{proof}

We now interpret these results in the quotient category $\oLCAA \coloneqq \LCAA / \LCAAf$.  We write $\oLCAAD$ for the full subcategory of $\oLCAA$ consisting of those objects which are discrete groups; the subcategory $\oLCAACV$ of $\oLCAA$ is defined similarly.

\begin{proposition}\label{proposition:fLocalization}
The category $\LCAAf$ is a strictly two-sided percolating subcategory of $\LCAA.$  Moreover, the following hold:
\begin{enumerate}
	\item\label{enumerate:fLocalization1} the quotient $\LCAA / \LCAAf$ is quasi-abelian,
	\item\label{enumerate:fLocalization2} the localization $\Qf\colon \LCAA \to \LCAA / \LCAAf$ commutes with finite limits and colimits,
	\item\label{enumerate:fLocalization3} the subcategories $\oLCAAD$ and $\oLCAACV$ of $\oLCAA$ are closed under isomorphisms,
	\item\label{enumerate:fLocalization4} $\oLCAAD = \LCAAD / \LCAAf$ and $\oLCAACV = \LCAACV / \LCAAf,$
	\item\label{enumerate:fLocalization5} any morphism $X \to Y$ in $\oLCAAD$ is strict.
\end{enumerate}
\end{proposition}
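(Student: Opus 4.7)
The plan is to verify the percolating axioms directly from elementary topological facts, then derive items (1) and (2) from the general localization theory of \cite{HenrardVanRoosmalen19b, HenrardVanRoosmalen19a}, and finally prove the structural items (3)--(5) via a careful analysis of weak $\LCAAf$-isomorphisms. The main technical obstacle will be showing that the properties ``discrete'' and ``compact plus vector'' are invariant under weak $\LCAAf$-isomorphism; this single observation underlies (3), (4), and (5). For the percolating axioms, \ref{A1} is elementary: closed subgroups, Hausdorff quotients, and extensions of finite groups are finite. For \ref{A2}, a morphism $f\colon F \to X$ from a finite module has image $f(F)$ finite, hence closed and discrete in the Hausdorff space $X$, with canonical factorization an open surjection of discrete groups followed by a closed embedding; dually, for $f\colon X \to F$ the kernel $\ker f$ is clopen in $X$ (as $\{0\}$ is clopen in the discrete group $F$), so $X/\ker f$ is discrete and the strict factorization $X \deflation f(X) \inflation F$ has finite image.

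Items (1) and (2) will then follow from the two-sided version of Theorem \ref{theorem:Quotient/LocalizationTheorem} applied to the quasi-abelian category $\LCAA$: both the inflation-exact and deflation-exact structures exist on $\oLCAA$, making it exact, and the upgraded statements about quasi-abelianness and the preservation of finite limits and colimits by $\Qf$ are proved in \cite{HenrardVanRoosmalen19b, HenrardVanRoosmalen19a} for strictly two-sided percolating Serre subcategories of quasi-abelian categories.

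The crucial observation for the remaining items is: if $s \colon X \to Y$ is a weak $\LCAAf$-isomorphism in $\LCAA$ and $X \in \LCAAD$, then $Y \in \LCAAD$. Indeed $\im s = X/\ker s$ is discrete, its inclusion $\im s \inflation Y$ is a closed subgroup of finite index (since $\coker s$ is finite), hence open (its complement is a finite union of closed cosets), so $Y$ contains the discrete open subgroup $\im s$ and is discrete itself. The analogue for $\LCAACV$ follows from Lemma \ref{lemma:QuotientsAndDiscreteSubobjectsOfVectorModules}: $\im s \in \LCAACV$ by part (2), and then the conflation $\im s \inflation Y \deflation \coker s$ with $\coker s$ finite (hence compact) places $Y$ in $\LCAACV$ by part (1). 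Item (3) is then immediate, since an isomorphism in $\oLCAA$ between objects of $\LCAA$ is represented by a pair of weak $\LCAAf$-isomorphisms out of a common source. Item (4) will follow because, for $X \in \LCAAD$ (resp.\ $\LCAACV$), every weak $\LCAAf$-isomorphism $s\colon X' \to X$ has discrete (resp.\ $\LCAACV$) source, so the Ore calculus describing $\Hom_{\oLCAA}(X, Y)$ for $X, Y$ in these subcategories takes place entirely inside $\LCAAD$ (resp.\ $\LCAACV$). Finally for item (5), by (4) any morphism in $\oLCAAD$ is represented by a left fraction with all terms in $\LCAAD$, say $X \xleftarrow{s} X' \xrightarrow{f} Y$; the map $f$ is strict in $\LCAA$ (between discrete groups, the kernel and image are automatically closed with the correct discrete quotient topology), so $\Qf(f)$ is a composition of an inflation and a deflation in $\oLCAA$, and precomposition with the isomorphism $\Qf(s)^{-1}$ preserves this strict factorization.
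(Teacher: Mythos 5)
Your proposal follows essentially the same route as the paper: verify the percolating axioms directly from elementary topology, invoke the general localization theory for items (1) and (2), reduce item (3) to invariance of the relevant subcategories under weak $\LCAAf$-isomorphisms, and derive (4) and (5) from (3). The topological details you supply (closed finite-index subgroups are open, $\ker f$ is clopen for $f\colon X \to F$ with $F$ finite discrete, etc.) are correct and rather more explicit than the paper, which simply asserts that the reduced cases are ``easily handled separately.''

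There is, however, a gap: you prove only one direction of the key invariance. You show that a weak $\LCAAf$-isomorphism $s\colon X \to Y$ with $X$ discrete (or in $\LCAACV$) forces $Y$ to be discrete (or in $\LCAACV$). But your proofs of (3) and (4) both require the \emph{reverse} implication as well. In (3), the roof $X \xleftarrow{s} Z \xrightarrow{t} Y$ with $X$ discrete requires first deducing that $Z$ is discrete from the weak isomorphism $s\colon Z \to X$, which is the reverse direction; only then does your forward direction applied to $t$ give $Y$ discrete. In (4) you explicitly assert that ``every weak $\LCAAf$-isomorphism $s\colon X' \to X$ has discrete source'' for discrete $X$, again the reverse implication. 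This reverse direction is true and just as easy --- $Z/\ker s \cong \im s$ is discrete, so $\ker s$ is open in $Z$, and being finite it is discrete, whence $\{0\}$ is open in $Z$; for $\LCAACV$ one uses projectivity of the vector summand to split it off and then observes that an extension of a finite module by a compact module is compact --- but you do not prove it. The paper avoids this asymmetry by phrasing the reduction explicitly as an ``if and only if'' and by factoring each weak isomorphism as a deflation with finite kernel followed by an inflation with finite cokernel, then treating the two cases separately.
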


\begin{proof}
It is easy to verify that $\LCAAf$ is a strictly inflation- and deflation-percolating in $\LCAA.$  It follows from \cite{HenrardVanRoosmalen19a} that $\LCAA / \LCAAf$ is quasi-abelian.  As the set $S_{\LCAAf}$ is a left and a right multiplicative set, the localization $\Qf$ commutes with finite limits and colimits.  For \eqref{enumerate:fLocalization3}, recall from Theorem \ref{theorem:Quotient/LocalizationTheorem} that $S_{\LCAAf}$ is saturated.  So, we can reduce to showing that for any weak isomorphism $s\colon X \stackrel{\sim}{\rightarrow} Y$, we have that $X \in \LCAAD$ (or in $\LCAACV$) if and only if $Y\in \LCAAD$ (or in $\LCAACV$).  As every weak isomorphism is a composition of inflations and deflations in (with cokernel and kernel in $\LCAAf$), we can furthermore assume that $s$ is of this form.  These cases are then easily handled separately.

The last two statements follow easily from \eqref{enumerate:fLocalization3}. 
\end{proof}

\begin{theorem}[Structure Theorem for $\oLCAA$]\label{theorem:TorsionPair}
The pair $(\oLCAACV, \oLCAAD)$ is a torsion pair in $\oLCAA$, meaning that $\Hom(\oLCAACV, \oLCAAD) = 0$ and every $M \in \oLCAA$ fits in a conflation
\[C_M \stackrel{i_M}{\inflation} M \stackrel{p_M}{\deflation} D_M\]
with $C_M \in \oLCAACV$ and $D_M \in \oLCAAD.$  Such a conflation is then unique up to unique isomorphism.
\end{theorem}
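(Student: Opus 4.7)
The plan is to deduce all three assertions from the Structure Theorem (Theorem \ref{theorem:StructureTheoremOfLocallyCompactModules}) by pushing it through the conflation-exact localization $\Qf \colon \LCAA \to \oLCAA$, invoking the properties collected in Proposition \ref{proposition:fLocalization}. Existence of the conflation is immediate: for $M \in \oLCAA$, regarded as an object of $\LCAA$, the Structure Theorem provides a conflation $C_M \oplus V_M \inflation M \deflation D_M$ in $\LCAA$, whose image under $\Qf$ is a conflation in $\oLCAA$ with compact-vector part in $\oLCAACV$ and discrete part in $\oLCAAD$.

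For the Hom-vanishing, I would take a morphism $g\colon X \to Y$ in $\oLCAA$ with $X \in \oLCAACV$ and $Y \in \oLCAAD$, choose representatives $X \in \LCAACV$ and $Y \in \LCAAD$ in $\LCAA$, and present $g$ by a right-fraction roof $X \stackrel{f}{\to} Y' \stackrel{s}{\leftarrow} Y$ with $s$ a weak $\LCAAf$-isomorphism (available since $S_{\LCAAf}$ is a right multiplicative set by Proposition \ref{proposition:fLocalization}). Proposition \ref{proposition:fLocalization}(3) then forces $Y' \in \LCAAD$. Writing $X \cong C \oplus V$ in $\LCAA$, the restriction $f|_V$ vanishes because any continuous map from $\mathbb{R}^n$ to a discrete group is zero, and the image of $f|_C$ is a compact subset of the discrete $Y'$, hence finite. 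Consequently $f$ factors through a finite $\A$-module, so $\Qf(f) = 0$ and therefore $g = 0$.

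Uniqueness then follows by the standard torsion-pair argument: given two conflations $C_i \inflation M \deflation D_i$ of the specified type ($i=1,2$), Hom-vanishing makes $C_1 \to M \to D_2 = 0$, producing a unique factorization $\phi \colon C_1 \to C_2$ through the inflation $C_2 \inflation M$; symmetrically one obtains $\psi \colon C_2 \to C_1$, and monicity of the inflations $C_i \inflation M$ forces both composites to be identities.

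I expect the Hom-vanishing step to be the main obstacle, since morphisms in the Gabriel-style quotient must be handled with care: one cannot argue purely on the level of $\LCAA$ because the morphism $g$ only lives in the localization. The essential leverage is Proposition \ref{proposition:fLocalization}(3), which keeps the intermediate object $Y'$ of the representing roof inside $\LCAAD \subset \LCAA$, precisely where the concrete topological arguments about vector modules and compact-to-discrete maps can be applied.
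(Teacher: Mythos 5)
Your proof is correct and takes the same route the paper's terse ``Directly from Theorem~\ref{theorem:StructureTheoremOfLocallyCompactModules}'' leaves implicit: push the Structure Theorem conflation through $\Qf$, establish the Hom-vanishing from the topological facts that continuous homomorphisms from vector modules to discrete modules vanish and that compact images in discrete modules are finite (hence in $\LCAAf$), and deduce uniqueness by the standard torsion-pair argument. The roof manipulation, using Proposition~\ref{proposition:fLocalization} to keep the apex of the representing roof inside $\LCAAD$, is precisely how one transports that concrete topological argument into the localization $\oLCAA$.
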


\begin{proof}
Directly from Theorem \ref{theorem:StructureTheoremOfLocallyCompactModules}.
\end{proof}

\begin{corollary}\label{corollary:DiscreteInCV}
Let $X \stackrel{f}{\inflation} Y \stackrel{g}{\deflation} Z$ be a conflation in $\oLCAA$.
\begin{enumerate}
	\item If $Y \in \oLCAACV$, then $Z \in \oLCAACV$.
	\item If $Y \in \oLCAAD$, then $X, Z \in \oLCAAD.$
	\item If $Y \in \oLCAACV$ and $X\in \oLCAAD$, then $X$ is finitely generated.
\end{enumerate}
\end{corollary}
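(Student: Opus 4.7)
The three items split into two groups: (1) and the $X$-part of (2) are formal consequences of the torsion pair $(\oLCAACV, \oLCAAD)$ of Theorem~\ref{theorem:TorsionPair}, while the $Z$-part of (2) and item (3) reduce to analogous statements in $\LCAA$ via the lifting machinery supplied by Proposition~\ref{proposition:fLocalization} and Theorem~\ref{theorem:Quotient/LocalizationTheorem}.

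For (1), take the torsion decomposition $C_Z \inflation Z \stackrel{p}{\deflation} D_Z$ of $Z$. The composite $p \circ g \colon Y \to D_Z$ lies in $\Hom(\oLCAACV, \oLCAAD)=0$, so $p \circ g=0$; since $g$ is a deflation (hence an epimorphism in the quasi-abelian category $\oLCAA$), $p=0$, forcing $D_Z=0$ and $Z \cong C_Z \in \oLCAACV$. The $X$-part of (2) is formally dual: decompose $X$ as $C_X \inflation X \deflation D_X$, note that $C_X \inflation X \stackrel{f}{\inflation} Y$ vanishes because it lies in $\Hom(\oLCAACV, \oLCAAD)=0$, and use monicity of $f$ to conclude $C_X=0$, i.e.\ $X \in \oLCAAD$.

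The $Z$-part of (2) and item (3) do not follow from the torsion pair alone (since $\oLCAAD$ is the torsion-free class, and need not a priori be closed under quotients). Instead, lift the conflation back to $\LCAA$: since $Q_f$ preserves and reflects conflations and $\oLCAAD = \LCAAD/\LCAAf$, $\oLCAACV = \LCAACV/\LCAAf$ (Proposition~\ref{proposition:fLocalization}(4)), the given conflation is isomorphic to $Q_f$ applied to a conflation $X' \inflation Y' \deflation Z'$ in $\LCAA$ in which $X'$ (resp.\ $Y'$) can be taken to lie in $\LCAAD$ (resp.\ $\LCAACV$). For (2)'s $Z$-part, $Z'$ is then a $\LCAA$-quotient of a discrete module and hence discrete, so $Z \cong Q_f(Z') \in \oLCAAD$. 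For (3), Lemma~\ref{lemma:QuotientsAndDiscreteSubobjectsOfVectorModules}(3) applied to the inflation $X' \inflation Y'$ gives that $X'$ is finitely generated as an $\mathfrak{A}$-module, and this transfers to $X$ since finite generation is invariant under weak isomorphism (weak isos having finite kernel and cokernel).

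The main technical point is arranging the representatives $X', Y'$ to actually lie in $\LCAAD$ or $\LCAACV$, rather than just being weakly isomorphic to such. This reduces to showing that $\LCAAD$ and $\LCAACV$ are closed under weak isomorphism in $\LCAA$. For $\LCAAD$: a weak iso $M \to N$ factors as $M \deflation M/K \inflation N$ with $K \in \LCAAf$; if $N$ is discrete, then $M/K$ is a closed subgroup of a discrete module, hence discrete, and $M$ is an extension of a discrete module by a finite one, hence discrete. For $\LCAACV$ the argument is similar: $M/K$ is a closed cofinite subgroup of $N \in \LCAACV$, and Lemma~\ref{lemma:QuotientsAndDiscreteSubobjectsOfVectorModules}(1) then handles the extension by the finite module $K$.
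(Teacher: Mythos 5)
Your proof is correct, but it splits the three items in a different way than the paper does and, for two of them, uses a genuinely different argument. The paper proves (1) by representing the inflation $f$ as a roof $X\stackrel{\sim}{\leftarrow}X'\stackrel{f'}{\rightarrow}Y$ in $\LCAA$, observing that $\coker f \cong Q_f(\coker_{\LCAA}f')$, and then invoking Lemma~\ref{lemma:QuotientsAndDiscreteSubobjectsOfVectorModules}(2) together with Proposition~\ref{proposition:fLocalization}(\ref{enumerate:fLocalization3}); items (2) and (3) are left as ``analogous.'' You instead deduce (1) and the $X$-part of (2) purely formally from the torsion pair of Theorem~\ref{theorem:TorsionPair} (the $\Hom$-vanishing kills the wrong component of the torsion decomposition), which is arguably cleaner and makes transparent exactly which parts of the corollary are formal consequences of the torsion theory and which are not. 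For the $Z$-part of (2) and for (3), your lifting-to-$\LCAA$ argument is the same in spirit as the paper's roof computation, just spelled out more explicitly; and your observation that one can choose the lifted conflation with $X'\in\LCAAD$ and $Y'\in\LCAACV$ is precisely the content of Proposition~\ref{proposition:fLocalization}(\ref{enumerate:fLocalization3}), so your final paragraph re-derives a fact you could simply have cited. One small thing worth noting: your torsion-pair argument does not apply to the $Z$-part of (2), as you correctly point out, because $\oLCAAD$ is the torsion-free class and torsion-free classes are not closed under quotients --- you are right to fall back on a concrete lift there, and indeed you need the observation that in $\LCAA$ an open quotient of a discrete module is discrete (which is not part of Lemma~\ref{lemma:QuotientsAndDiscreteSubobjectsOfVectorModules} but is elementary).
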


\begin{proof}
We only prove the first statement.  The other statements can be proven in an analogous way.  Let $X \stackrel{\sim}{\leftarrow} X' \stackrel{f'}{\rightarrow} Y$ be a roof in $\LCAA$ representing $f$; in $\oLCAA$, we have $\coker f \cong \coker f'.$  It follows from Lemma \ref{lemma:QuotientsAndDiscreteSubobjectsOfVectorModules} that $\coker f'\in \LCAACV$ and hence $\coker f \in \LCAACV$ by Proposition \ref{proposition:fLocalization}.(\ref{enumerate:fLocalization3}).
\end{proof}
\section{\texorpdfstring{$K$}{K}-theory of locally compact modules over an order}\label{section:ProofOfMainTheorem}

Throughout this section, $A$ denotes a finite-dimensional semisimple $\mathbb{Q}$-algebra and $\mathfrak{A}\subset A$ is a $\mathbb{Z}$-order. The aim of this section is to show Theorem \ref{theorem:MainTheorem} from the introduction.  We proceed in four steps.

\subsection{\texorpdfstring{The localization $\QC\colon \LCAA \to \LCAA / \LCAAC$}{First quotient}}
The following proposition (see \cite{HenrardVanRoosmalen19b}) reduces the study of localizing invariants of $\LCAA$ (such as non-connective $K$-theory) to that of the quotient category $\LCAA/\LCAAC$, which we shall call $\EE$.

\begin{proposition}\label{proposition:FirstLocalization}
	The subcategory $\LCAAC\subseteq\LCAA$ is a strictly inflation-percolating subcategory.  The quotient $\QC\colon \LCAA\to \EE (=\LCAA / \LCAAC)$ induces an exact sequence of stable $\infty$-categories
	\[\Dbinf(\LCAAC)\to \Dbinf(\LCAA)\to \Dbinf(\EE).\]
	As every object in $\Dbinf(\LCAAC)$ can be trivialized using an Eilenberg swindle with infinite products, for any localizing invariant $K$, there is an equivalence $K(\LCAA) \simeq K(\EE).$
\end{proposition}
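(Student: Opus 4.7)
The plan is to verify the three claims of the proposition in order. For the strict inflation-percolating condition, I would check axioms \ref{A1} and \ref{A2} by elementary topology of locally compact groups. For \ref{A1}: in a conflation $X \inflation Y \deflation Z$ in $\LCAA$, compactness of $Y$ transfers to the closed subgroup $X$ and the continuous Hausdorff image $Z$, while conversely an extension of a compact group by a compact group is compact (a compact neighborhood of the identity in $Y$ whose image covers $Z$, together with the compact kernel $X$, exhausts $Y$). For \ref{A2}: given $f\colon C \to X$ with $C$ compact, the image $f(C)$ is a compact, hence closed, $\A$-submodule of $X$, so $f$ factors as $C \deflation f(C) \inflation X$; the left map is an open surjection because any continuous surjection between compact Hausdorff groups is a quotient map and quotient homomorphisms of topological groups are automatically open, while the right map is a closed inclusion.

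To pass from the Verdier sequence provided by Theorem~\ref{theorem:Quotient/LocalizationTheorem} to the claimed exact sequence in $\StabInfCat$ with $\Dbinf(\LCAAC)$ on the left, I need to verify that $\LCAAC$ has enough $\LCAA$-injectives. I would construct these via Pontryagin duality: set $I_0 \coloneqq \widehat{\A}$, the Pontryagin dual of $\A$ regarded as a left $\A$-module. Then $I_0$ is a compact right $\A$-module, and it is $\LCAA$-injective because its dual $\A$ is projective in the category of locally compact left $\A$-modules: for any deflation $Y \deflation Z$, an element of $Z$ can be lifted to $Y$, and since $\A$ is discrete, the induced module map $\A \to Y$ is automatically continuous. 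For $M\in\LCAAC$, the discrete left $\A$-module $\widehat{M}$ admits a free surjection $\bigoplus_I \A \deflation \widehat{M}$, and Pontryagin-dualizing yields an inflation $M \inflation \prod_I I_0$ into a target which is compact by Tychonoff and $\LCAA$-injective because products of injectives remain injective (products being exact in $\LCAA$).

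Finally, for the Eilenberg swindle: $\LCAAC$ is closed under countable products by Tychonoff, and these products are exact in $\LCAA$. The endofunctor $T\colon \LCAAC \to \LCAAC$ sending $X$ to $\prod_{n\geq 1} X$ is therefore exact and satisfies $\mathrm{id}_{\LCAAC} \oplus T \cong T$, exhibiting $\LCAAC$ as a flasque exact category. Extending $T$ to $\Dbinf(\LCAAC)$ and applying any localizing invariant $K$ gives $K(\mathrm{id}) + K(T) \simeq K(T)$, so $K(\LCAAC)\simeq 0$, and the fiber sequence of the previous step collapses to the claimed equivalence $K(\LCAA)\simeq K(\EE)$. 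The principal obstacle is the enough-injectives verification; the remaining steps are either elementary point-set topology or a textbook swindle.
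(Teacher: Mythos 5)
The paper itself gives no proof of this proposition, deferring to \cite{HenrardVanRoosmalen19b}, so I assess your argument on its own merits: it is correct, and it is the natural argument one would expect the cited reference to use. You correctly isolate the two things that must be checked beyond what Theorem~\ref{theorem:Quotient/LocalizationTheorem} provides for free, namely that $\LCAAC$ satisfies axioms \ref{A1} and \ref{A2} in $\LCAA$, and that $\LCAAC$ has enough $\LCAA$-injectives; only the latter upgrades the Verdier sequence on $\Db$ to the stated exact sequence of stable $\infty$-categories with $\Dbinf(\LCAAC)$ on the left. Your Pontryagin-duality construction of injectives $M \inflation \prod_I \widehat{\A}$ is the standard one (compare \cite{Moskowitz67} and its module version in \cite{Braunling18}), and the Eilenberg swindle with countable products is exactly the one alluded to in the statement.

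Two small remarks. First, the parenthetical ``products being exact in $\LCAA$'' in your enough-injectives step is not really what is used: that a product of injectives is injective follows, whenever the product exists in the category, from the isomorphism $\Hom(-,\prod_j I_j)\cong\prod_j\Hom(-,I_j)$ together with the fact that a product of short exact sequences of abelian groups is exact; no further exactness hypothesis on $\LCAA$ is needed. Second, the compactness of $Y$ in an extension $X\inflation Y\deflation Z$ with $X,Z$ compact is not quite that a single compact neighborhood of the identity maps onto $Z$; rather, one takes a compact neighbourhood $K$ of $0\in Y$, covers $Z$ by finitely many translates of its image, lifts the translating elements, and writes $Y$ as a finite union of translates of the compact set $X+K$. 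This is the standard argument and your phrasing clearly gestures at it, but it is worth stating precisely.
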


\subsection{\texorpdfstring{The functor $\QV\colon \LCAA / \LCAAC \to \FF$}{Second quotient}}  We now write $\VV$ for the full additive subcategory of $\EE$ generated by the vector $\mathfrak{A}$-modules. Our first goal is to show that $\VV$ is a strictly inflation-percolating subcategory of $\ex{\EE}$, the exact hull of $\EE$, so that we can consider the quotient $\FF \coloneqq \ex{\EE}/\VV.$  We start with the following lemma.

\begin{lemma}
\makeatletter
\hyper@anchor{\@currentHref}%
\makeatother\label{lemma:BasicLemma}
	\begin{enumerate}
		\item\label{item:NaturalEquivalence} For any vector $\mathfrak{A}$-module $V$, the localization functor $\QC$ induces a natural equivalence
		\[\QC\colon \Hom_{\LCAA}(-,V)\to \Hom_{\EE}(\QC(-),\QC(V) ).\]
		In particular, it follows that $V$ is injective in $\EE$.
		\item\label{item:VVisEquivalentToModAR} The category $\VV$ is equivalent to the category $\smod(A_{\mathbb{R}})$ where $A_{\mathbb{R}}\coloneqq\mathfrak{A}\otimes_{\mathbb{Z}}\mathbb{R}$. In particular, $\VV$ is a fully exact abelian subcategory of $\EE$.
	\end{enumerate}
\end{lemma}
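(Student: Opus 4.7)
The plan is to prove part (\ref{item:NaturalEquivalence}) via the calculus of left fractions on $\LCAA$ with respect to $S_{\LCAAC}$, exploiting the absence of nonzero morphisms from compact to vector modules, and then to derive part (\ref{item:VVisEquivalentToModAR}) by identifying continuous $\A$-actions on $\bR^n$ with finitely generated $A_\bR$-module structures.

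For part (\ref{item:NaturalEquivalence}), I would first record the key vanishing $\Hom_{\LCAA}(C,V)=0$ for every $C\in\LCAAC$ and $V\in\LCAAR$: the image of a continuous group homomorphism from a compact group into $\bR^n$ is a compact subgroup of $\bR^n$, hence trivial. By Theorem \ref{theorem:Quotient/LocalizationTheorem}, a morphism $\QC X\to\QC V$ in $\EE$ is an equivalence class of roofs $X\xleftarrow{s}X'\xrightarrow{g}V$ with $s$ strict and $\ker(s),\coker(s)\in\LCAAC$. The composition $\ker(s)\to X'\xrightarrow{g}V$ vanishes by the above, so $g$ descends to $\im(s)\to V$; since $\im(s)\inflation X$ has cokernel in $\LCAAC$, the injectivity of $V$ in $\LCAA$ (Theorem \ref{theorem:VectorModulesAreBothInjectiveAndProjectiveInLCAA}) extends this to a genuine map $X\to V$ representing the original roof. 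For injectivity of the canonical map, if $f_1,f_2\colon X\to V$ agree in $\EE$, then $(f_1-f_2)\circ s=0$ for some weak isomorphism $s$, so $f_1-f_2$ factors through the compact quotient $X/\im(s)$ and must vanish. The ``in particular'' claim that $V$ is injective in $\EE$ then follows formally: every inflation $X\inflation Y$ in $\EE$ is isomorphic to $\QC$ of an inflation $X_0\inflation Y_0$ in $\LCAA$ by Theorem \ref{theorem:Quotient/LocalizationTheorem}, a morphism $X\to V$ in $\EE$ lifts through the natural equivalence just established to an honest map $X_0\to V$, and injectivity of $V$ in $\LCAA$ extends this to $Y_0\to V$, whose image under $\QC$ is the required extension.

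For part (\ref{item:VVisEquivalentToModAR}), I would construct mutually quasi-inverse functors $\smod(A_\bR)\rightleftarrows\VV$. Any finitely generated $A_\bR$-module $M$ is finite-dimensional over $\bR$, so its underlying group is $\bR^n$ with the standard topology; restriction of scalars along $\A\to A_\bR$ turns it into a vector $\A$-module. Conversely, given a vector $\A$-module $V$ whose underlying locally compact abelian group is $\bR^n$, the continuous $\A$-action $\A\to\End_{\LCA}(\bR^n)$ lands in $\End_\bR(\bR^n)=M_n(\bR)$, because every continuous group endomorphism of $\bR^n$ is automatically $\bR$-linear; as $A_\bR=\A\otimes_\bZ\bR$, this extends uniquely to an $A_\bR$-module structure on $V$ that is finitely generated over $A_\bR$ (being finite-dimensional over $\bR$). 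These constructions are inverse up to natural isomorphism. To conclude that $\VV$ is a fully exact abelian subcategory of $\EE$, I would observe that $A_\bR$ is a finite-dimensional semisimple $\bR$-algebra, so $\smod(A_\bR)$ is semisimple abelian and every short exact sequence splits; split short exact sequences of vector $\A$-modules are conflations in $\LCAA$ and hence in $\EE$, so the abelian structure on $\smod(A_\bR)\simeq\VV$ matches the conflation structure inherited from $\EE$.

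The main technical point is ensuring that the calculus of left fractions is compatible with both the vanishing of compact-to-vector maps and the injectivity of $V$, so that arbitrary roofs reduce to honest morphisms; once this is in place, part (\ref{item:VVisEquivalentToModAR}) is essentially a linear-algebraic exercise.
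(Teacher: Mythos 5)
Your overall strategy for part~(\ref{item:NaturalEquivalence}) --- combine the vanishing of $\Hom_{\LCAA}(C,V)=0$ for $C$ compact with the injectivity of $V$ in $\LCAA$ so that roofs collapse to honest morphisms --- is the same as the paper's. But there is a genuine gap in the shape of the roofs you use. Theorem~\ref{theorem:Quotient/LocalizationTheorem} says $S_{\LCAAC}$ is a \emph{left} multiplicative system: a morphism $\QC X\to\QC V$ in $\EE$ is represented by a roof $X\xrightarrow{f}Y\xleftarrow{s}V$ with $s\in S_{\LCAAC}$ (so a fraction $s^{-1}f$), and $\QC(f_1)=\QC(f_2)$ is detected by \emph{post}composing with some $t\in S_{\LCAAC}$. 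You instead work throughout with right fractions $X\xleftarrow{s}X'\xrightarrow{g}V$ and detect equality by precomposition with $s$. Since $\LCAAC$ is only \emph{inflation}-percolating (not two-sided, unlike $\LCAAf$), a calculus of right fractions is not available, so your surjectivity and injectivity arguments as written do not apply to the morphisms of $\EE$. The repair is short but reorganizes the argument: with a left roof $X\xrightarrow{f}Y\xleftarrow{s}V$, the weak isomorphism $s\colon V\to Y$ has $\ker(s)\in\LCAAC$, hence $\ker(s)=0$ because $V$ has no nonzero compact submodules; being strict and monic, $s$ is an inflation, and injectivity of $V$ in $\LCAA$ splits it, $ts=1_V$. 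Then $\QC(tf)$ equals the given morphism, giving surjectivity; injectivity follows because $tf=0$ with $t$ monic forces $f=0$. (This is exactly the paper's argument.) Your closing argument that $V$ is injective in $\EE$ also tacitly assumes every inflation in $\EE$ lifts to one in $\LCAA$; the paper instead notes directly that the natural isomorphism and the fact that $\QC$ preserves and reflects conflations make $\Hom_{\EE}(-,V)$ an exact functor.

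Part~(\ref{item:VVisEquivalentToModAR}) is fine and in fact more explicit than the paper, which just says it follows from part~(\ref{item:NaturalEquivalence}): you spell out the equivalence $\LCAAR\simeq\smod(A_\bR)$ (continuous endomorphisms of $\bR^n$ are $\bR$-linear, so a continuous $\A$-action extends uniquely to an $A_\bR$-structure, and finite $\bR$-dimension gives finite generation), and then part~(\ref{item:NaturalEquivalence}) identifies the hom-sets of $\VV\subseteq\EE$ with those of $\LCAAR\subseteq\LCAA$, yielding $\VV\simeq\smod(A_\bR)$. You should state that last step explicitly, but the content is there.
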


\begin{proof}
	\begin{enumerate}
		\item Let $f\in \Hom_{\LCAA}(X,V)$ such that $\QC(f)=0$. There exists a weak isomorphism $t\colon V \stackrel{\sim}{\rightarrow} Y$ in $S_{\LCAAC}$ (thus, with $\ker f \in \LCAAC$) such that $t\circ f=0$ in $\LCAA$.  As the only compact submodule of $V$ is trivial, $t$ is a monomorphism.  It follows that $f=0$ in $\LCAA$. This shows $\Hom_{\LCAA}(X,V)\to \Hom_{\EE}(X,V)$ is an injection.
		
		To show that it is a surjection, let $g\in \Hom_{\EE}(X,V)$ be represented by a roof $X\stackrel{f}{\rightarrow}Y \stackrel{s}{\leftarrow} V$ with $s\in S_{\LCAAC}$. Note that $s$ is an inflation (as $s$ is strict and $V$ only has the trivial compact submodule).  As $V$ is injective in $\LCAA$, the inflation $V\stackrel{s}{\inflation}Y$ is a coretraction. Let $t\colon Y\deflation V$ be the corresponding retraction, i.e.~$ts=1_V$. It follows that $\QC(t\circ f)=g$. This shows the desired bijection.
		
As $V$ is injective in $\LCAA$, $\Hom_{\LCAA}(-,V)$ is an exact functor. Hence $\Hom_{\EE}(-,V)$ is exact as well and thus $V$ is injective in $\EE$ (this characterization of injective objects remains valid for inflation-exact categories, see \cite[Proposition~3.22]{HenrardVanRoosmalen19b}).
		\item This follows immediately from the above equivalence.\qedhere
	\end{enumerate}
\end{proof}

\begin{proposition}\label{proposition:VVSatisfiesA1AndA2InEE}
	The category $\VV$ is a strictly inflation-percolating subcategory of the exact hull $\ex{\EE}$ of $\EE$.
\end{proposition}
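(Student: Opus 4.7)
The plan is to apply Proposition~\ref{proposition:A1A2+InjectiveLiftsToHull}: since every vector module is injective in $\EE$ by Lemma~\ref{lemma:BasicLemma}(\ref{item:NaturalEquivalence}), it suffices to verify axioms~\ref{A1} and~\ref{A2} for $\VV\subseteq\EE$.

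For axiom~\ref{A2}, given a morphism $f\colon V\to Y$ in $\EE$ with $V\in\VV$, I represent $f$ by a roof $V\stackrel{g}{\to}Y_{0}\stackrel{s}{\leftarrow}Y$ with $s\in S_{\LCAAC}$. Since $\LCAA$ is quasi-abelian, $g$ admits a strict factorisation $V\deflation I\inflation Y_{0}$ in $\LCAA$, and Lemma~\ref{lemma:QuotientsAndDiscreteSubobjectsOfVectorModules}(2) places $I$ in $\LCAACV$. Applying $\QC$ annihilates the compact summand of $I$, so $\QC(I)\in\VV$, and composing with the iso $\QC(s)^{-1}$ yields the required factorisation $V\deflation \QC(I)\inflation Y$ in $\EE$.

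For axiom~\ref{A1}, the implication $X,Z\in\VV\Rightarrow Y\in\VV$ is immediate: $X\in\VV$ is injective, so the conflation splits into $Y\cong X\oplus Z$. Conversely, assume $Y\in\VV$. To see $Z\in\VV$, apply axiom~\ref{A2} (just proved) to the deflation $Y\deflation Z$, yielding a factorisation $Y\deflation I\inflation Z$ with $I\in\VV$. Completing $I\inflation Z$ to a conflation $I\inflation Z\deflation Z/I$, the composite $Y\deflation Z\deflation Z/I$ vanishes; since $Y\deflation Z$ is epic, $Z\deflation Z/I$ itself vanishes, forcing $Z/I=0$, and hence $Z\cong I\in\VV$.

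It remains to show $X\in\VV$, which is the main obstacle. Choose vector modules $V_{Y},V_{Z}$ with $Y\cong\QC(V_{Y})$ and $Z\cong\QC(V_{Z})$. Lemma~\ref{lemma:BasicLemma}(\ref{item:NaturalEquivalence}) lifts the deflation $Y\deflation Z$ uniquely to a morphism $g\colon V_{Y}\to V_{Z}$ in $\LCAA$. As a continuous $\A$-linear map between finite-dimensional real vector spaces, $g$ is automatically $\mathbb{R}$-linear, so $\coker g$ computed in $\LCAA$ is again a vector $\A$-module; since $\QC(g)$ is a deflation, $\QC(\coker g)=0$, whence $\coker g$ is a compact vector module, hence zero. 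Therefore $g$ is a surjective strict morphism, i.e., a deflation in $\LCAA$. By Theorem~\ref{theorem:VectorModulesAreBothInjectiveAndProjectiveInLCAA}, $V_{Z}$ is projective, so $g$ splits as $V_{Y}\cong\ker g\oplus V_{Z}$ with $\ker g$ a vector module. Applying $\QC$ yields $Y\cong\QC(\ker g)\oplus Z$, identifying $X\cong\QC(\ker g)\in\VV$.
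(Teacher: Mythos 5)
Your overall strategy matches the paper's: invoke Proposition~\ref{proposition:A1A2+InjectiveLiftsToHull} via the injectivity of vector modules in $\EE$ (Lemma~\ref{lemma:BasicLemma}), and then verify \ref{A1} and \ref{A2} for $\VV\subseteq\EE$. However, your argument for axiom~\ref{A2} has a genuine gap. You assert that ``since $\LCAA$ is quasi-abelian, $g$ admits a strict factorisation $V\deflation I\inflation Y_{0}$ in $\LCAA$.'' This is false: quasi-abelian does not mean every morphism is strict, and morphisms out of vector modules are in general not strict in $\LCAA$. A concrete counterexample (with $\A=\bZ$) is the irrational winding $g\colon\bR\to\bT^{2}$, $t\mapsto(t,\sqrt{2}\,t)\bmod\bZ^{2}$: here $\ker g=0$, so $\operatorname{coim}(g)=\bR$, but $\operatorname{im}(g)=\overline{g(\bR)}=\bT^{2}$, and the bimorphism $\bR\to\bT^{2}$ is not an inflation. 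For this $g$ your proposed ``$I$'' would be $\bR$, yet the induced map $\bR\to 0$ in $\EE$ is certainly not an inflation, so the claimed factorisation of $\QC(g)$ breaks down. Consequently, Lemma~\ref{lemma:QuotientsAndDiscreteSubobjectsOfVectorModules}(2) cannot be applied as you use it, since it requires an actual deflation $V\deflation I$ followed by an inflation $I\inflation Y_0$.

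The paper's proof of \ref{A2} circumvents this precisely by never asserting strictness of $g$ in $\LCAA$: one first uses connectedness of $V$ to kill the composite $V\to Y_0\deflation D_{Y_0}$, so that $g$ factors through the inflation $C_{Y_0}\oplus V_{Y_0}\inflation Y_0$; the further projection $C_{Y_0}\oplus V_{Y_0}\deflation V_{Y_0}$ becomes an isomorphism in $\EE$, and the composite $V\to V_{Y_0}$, being an $\bR$-linear map of vector modules, \emph{is} strict. Axiom~\ref{L1} then yields strictness of the relevant composite in $\EE$. A repaired version of your argument would similarly have to pass to $\operatorname{im}(g)=\ker(\coker g)$ (a closed connected subgroup, hence in $\LCAACV$) rather than $\operatorname{coim}(g)$, and argue that $V\to\operatorname{im}(g)\deflation V_{\operatorname{im}(g)}$ is a surjective $\bR$-linear map.

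As a side remark, your derivation of $X\in\VV$ in \ref{A1}, lifting the deflation to $\LCAA$ and splitting via projectivity of $V_Z$, is correct but considerably more involved than the paper's one-liner: once $Y,Z\in\VV$, the kernel $X=\ker\rho$ already lies in $\VV$ because $\VV\subseteq\EE$ is an abelian subcategory (Lemma~\ref{lemma:BasicLemma}(\ref{item:VVisEquivalentToModAR})). Also note that this part of \ref{A1} depends on \ref{A2} (to know $Z\in\VV$ first), so the gap there propagates.
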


\begin{proof}
	By Proposition \ref{proposition:A1A2+InjectiveLiftsToHull} and Lemma \ref{lemma:BasicLemma}.(\ref{item:NaturalEquivalence}), it suffices to show that $\VV\subseteq \EE$ satisfies axioms \ref{A1} and \ref{A2}.

	We first show axiom \ref{A2}. Let $f\in \Hom_{\EE}(V,X)$ be represented by a roof $V \stackrel{g}{\rightarrow} Y \stackrel{s}{\leftarrow}X$ with $s\in S_{\LCAAC}$. As the image of a connected space is connected, the Structure Theorem yields the following commutative diagram in $\LCAA$:
		\[\xymatrix{
			& D_Y & \\
			V\ar[r]^g\ar@{.>}[ru]^0\ar@{.>}[rd]_h & Y\ar@{->>}[u]^{p_Y} & X\ar[l]^{s}\\
			& C_Y \oplus V_Y\ar@{>->}[u]^{i_Y} & 
		}\] Here $V_Y$ is a vector $\mathfrak{A}$-module. As the projection $C_Y \oplus V_Y \deflation V_Y$ is an isomorphism in $\EE$ and the composition $V \to V_Y$ is strict, we know that $V\to C_Y\oplus V_Y$ is strict in $\EE$.  As $\EE$ is an inflation-exact category, axiom \ref{L1} yields that the composition of inflations is an inflation. It follows that the composition $i_Y\circ h$ is strict in $\EE$. This shows axiom \ref{A2}.
	
	We now show axiom \ref{A1}, i.e.~that $\VV$ is a Serre subcategory of $\EE$ . Let $X\stackrel{\iota}{\inflation} Y\stackrel{\rho}{\deflation} Z$ be a conflation in $\EE$. Assume that $Y\in \VV$. By axiom $\ref{A2}$, $\rho$ is strict with image in $\VV$, thus $Z\in \VV$. As $X\cong \ker(\rho)$ is the kernel of a morphism in $\VV$ and $\VV\subseteq\EE$ is an abelian subcategory by Lemma \ref{lemma:BasicLemma}.\eqref{item:VVisEquivalentToModAR}, $X\in \VV$. 
	
	Conversely, assume that $X,Z\in \VV$. By Lemma \ref{lemma:BasicLemma}.\eqref{item:NaturalEquivalence}, $X$ is injective in $\EE$. It follows that the conflation $(\iota,\rho)$ splits and thus $Y\cong X\oplus Z$ belongs to $\VV$.
\end{proof}

\begin{corollary}\label{Corollary:SecondFibrationSequence}
The quotient functor $\QV\colon \ex{\EE}\to \FF (\coloneqq \ex{\EE} / \VV)$ induces a fibre sequence
	\[K(\VV)\to K(\ex{\EE})\to K(\FF),\]
where $K$ is any localizing invariant.
\end{corollary}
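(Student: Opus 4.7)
The plan is to deduce the fibre sequence directly from the localization statement at the end of Theorem \ref{theorem:Quotient/LocalizationTheorem}, applied to the inclusion $\VV \subseteq \ex{\EE}$.  By Proposition \ref{proposition:VVSatisfiesA1AndA2InEE}, $\VV$ is a strictly inflation-percolating subcategory of the exact category $\ex{\EE}$, so the first two clauses of Theorem \ref{theorem:Quotient/LocalizationTheorem} already give a Verdier localization sequence on bounded derived categories; the only extra input needed to upgrade this to an exact sequence of stable $\infty$-categories is the hypothesis that $\VV$ has enough $\ex{\EE}$-injectives.

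The key observation is that this hypothesis is essentially automatic here.  In the proof of Proposition \ref{proposition:A1A2+InjectiveLiftsToHull}, which was used to establish Proposition \ref{proposition:VVSatisfiesA1AndA2InEE}, it is shown that every object of $\VV$ is injective in $\ex{\EE}$ (using Lemma \ref{lemma:BasicLemma}.\eqref{item:NaturalEquivalence} and the fact that $\ex{\EE}$ is the extension closure of $\EE$ in $\Db(\EE)$).  Hence for every $V \in \VV$, the identity $V \stackrel{1}{\inflation} V$ is an inflation into an $\ex{\EE}$-injective object of $\VV$; so $\VV$ trivially has enough $\ex{\EE}$-injectives.

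Applying Theorem \ref{theorem:Quotient/LocalizationTheorem} to $\VV \subseteq \ex{\EE}$ thus yields an exact sequence in $\StabInfCat$:
\[\Dbinf(\VV)\to \Dbinf(\ex{\EE})\to \Dbinf(\ex{\EE}/\VV) = \Dbinf(\FF).\]
Since any localizing invariant $K\colon \StabInfCat \to \DD$ sends exact sequences of stable $\infty$-categories to fibre sequences in $\DD$, this induces the desired fibre sequence $K(\VV)\to K(\ex{\EE})\to K(\FF)$.

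There is no substantial obstacle: the corollary is essentially a direct invocation of Theorem \ref{theorem:Quotient/LocalizationTheorem}, with the injectivity hypothesis trivially satisfied because $\VV$ already consists of injectives in $\ex{\EE}$.  The real work was done earlier, in establishing that $\VV$ is strictly inflation-percolating in $\ex{\EE}$ (Proposition \ref{proposition:VVSatisfiesA1AndA2InEE}).
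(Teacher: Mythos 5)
Your proof is correct and follows essentially the same route as the paper: both invoke Theorem \ref{theorem:Quotient/LocalizationTheorem} once one knows that $\VV$ is strictly inflation-percolating in $\ex{\EE}$ and has enough $\ex{\EE}$-injectives, the latter being automatic because $\VV$ consists of $\ex{\EE}$-injective objects (via Lemma \ref{lemma:BasicLemma} and the lift to the exact hull). You merely spell out the injectivity-lifting step more explicitly than the paper does.
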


\begin{proof}
By Lemma \ref{lemma:BasicLemma}, we know that $\VV$ contains enough $\ex{\EE}$-injective objects.  The statement then follows from Theorem \ref{theorem:Quotient/LocalizationTheorem}.
\end{proof}

\begin{proposition}\label{proposition:UniversalQCV}
The functor $\QCV\colon \LCAA \to \EE \to \ex{\EE} \to \FF$ is 2-universal with respect to the conflation-exact functors $F\colon \LCAA \to \CC$ with $\CC$ exact and $F(\LCAACV) = 0$, thus the functor $-\circ \QCV\colon \Fun(\FF, \CC) \to \Fun(\LCAA, \CC)$ is a fully faithful functor whose essential image consists of those $F\colon \LCAA \to \CC$ for which $F(\LCAACV) = 0$.
\end{proposition}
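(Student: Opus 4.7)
The plan is to chain together three universal properties already established: that of the percolating quotient $\QC$ (Theorem~\ref{theorem:Quotient/LocalizationTheorem}(3)), that of the exact hull $j\colon\EE\to\ex{\EE}$ (Proposition~\ref{proposition:ExactHull}(1)), and that of the second percolating quotient $\QV$ (Theorem~\ref{theorem:Quotient/LocalizationTheorem}(3), applicable since $\VV\subseteq\ex{\EE}$ is strictly inflation-percolating by Proposition~\ref{proposition:VVSatisfiesA1AndA2InEE}). Each of these is a $2$-universal property in the stated $2$-category of (conflation-)exact functors, so their composition is again $2$-universal with respect to the conjunction of the three vanishing/exactness hypotheses.

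Concretely, let $F\colon\LCAA\to\CC$ be conflation-exact with $\CC$ exact and $F(\LCAACV)=0$. Since $\LCAAC\subseteq\LCAACV$, we have $F(\LCAAC)=0$, so Theorem~\ref{theorem:Quotient/LocalizationTheorem}(3) factors $F$ uniquely as $F=F_{1}\circ\QC$ with $F_{1}\colon\EE\to\CC$ conflation-exact. Since $\CC$ is exact, Proposition~\ref{proposition:ExactHull}(1) then factors $F_{1}$ uniquely as $F_{1}=F_{2}\circ j$ with $F_{2}\colon\ex{\EE}\to\CC$ exact. Next, for any $V\in\LCAAR$ one has $F(V)=0$ by hypothesis, and since $j\circ\QC$ sends $V$ to its image in $\VV\subseteq\ex{\EE}$, the uniqueness of the factorizations forces $F_{2}(V)=0$; thus $F_{2}$ kills $\VV$. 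One final application of Theorem~\ref{theorem:Quotient/LocalizationTheorem}(3) to $\VV\subseteq\ex{\EE}$ then factors $F_{2}$ uniquely as $F_{2}=F_{3}\circ\QV$, yielding the desired unique factorization $F=F_{3}\circ\QCV$.

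Conversely, to identify the essential image one checks that $\QCV$ itself satisfies the listed properties. It is conflation-exact as a composition of conflation-exact functors. For $X\in\LCAACV$, write $X\cong C\oplus V$ with $C\in\LCAAC$ and $V\in\LCAAR$: then $\QC(X)\cong\QC(V)$ lies in $\VV\subseteq\EE$, so $j(\QC(X))\in\VV\subseteq\ex{\EE}$, and $\QV$ kills it. Hence $\QCV(\LCAACV)=0$, so any $F$ in the essential image automatically vanishes on $\LCAACV$.

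No step presents a genuine obstacle; the main care required is the middle verification that $F_{2}$ vanishes on $\VV\subseteq\ex{\EE}$, which is immediate once one uses Lemma~\ref{lemma:BasicLemma}(\ref{item:NaturalEquivalence}) to identify the vector modules in $\EE$ with their preimages in $\LCAA$ and then invokes that $j$ is fully faithful. The full faithfulness of $-\circ\QCV$ on natural transformations follows directly from the fact that each of the three intermediate universal properties is itself $2$-universal, so natural transformations lift stepwise through $\QC$, $j$, and $\QV$ in a canonical bijective manner.
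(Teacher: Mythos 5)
Your argument is correct and matches the paper's approach: the paper's proof is the one-line statement that the universal properties of the three intermediate functors $\QC$, $j$, and $\QV$ should be combined, and you carry that out explicitly, including the necessary middle check that the induced functor on $\ex{\EE}$ vanishes on $\VV$.
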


\begin{proof}
From combining each of the universal properties of $\LCAA \to \EE \to \ex{\EE} \to \FF$.
\end{proof}

\begin{remark}\label{remark:QCVFinitelyGenerated}
Note that in $\EE$, we have $\A \cong \bR \otimes_\bZ \A \in \VV$.  In particular, $\QCV(\A) = 0.$  Hence, $\QCV$ sends every finitely generated discrete $\A$-module to zero. 
\end{remark}

\subsection{\texorpdfstring{The equivalence $\FF \simeq \Mod \A / \mod \A$}{An equivalence}}

In order to complete the proof of Theorem \ref{theorem:MainTheorem}, we show that $\FF \simeq \Mod \A / \mod \A$ (see Proposition \ref{proposition:PsiPhi}).  For this, consider the localization functors $\QA\colon \Mod \A \to \Mod \A / \mod \A$ and $\QfA\colon \Mod \A \to \Mod \A / \fmod \A$, where $\fmod \A$ is the full subcategory of $\Mod \A$ consisting of finite $\A$-modules.  It follows from Proposition \ref{proposition:fLocalization}.(\ref{enumerate:fLocalization4}) that $\Qf(\LCAAD) \simeq \Mod \A / \fmod \A.$  Moreover, the universal property of $\QfA$ shows that there is a unique functor $\QA'\colon \Mod \A / \fmod \A \to \Mod \A / \mod \A$ such that $\QA = \QA' \circ \QfA$.

The torsion-free part functor $D\colon \oLCAA\to \oLCAAD\colon M \mapsto D_M$ from Theorem \ref{theorem:TorsionPair} need not be conflation-exact.  This can be seen by setting $\A = \bZ$ and starting from the conflation $\bZ \inflation \bR \deflation \bR / \bZ.$  However, we need not change much to obtain a conflation-exact functor.

\begin{proposition}\label{proposition:ConflationExact}
The functor $\QA' \circ D \colon \LCAA / \LCAAf \to \Mod \A/ \mod \A$ is conflation-exact.
\end{proposition}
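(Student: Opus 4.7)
The plan is to split the argument into a formal part, which uses only that $D$ is a left adjoint and that $\QA'$ is an exact Serre localization, and a single concrete LCA-theoretic computation. First, by Theorem \ref{theorem:TorsionPair} the pair $(\oLCAACV,\oLCAAD)$ is a torsion pair in $\oLCAA$, so the torsion-free functor $D$ is left adjoint to the inclusion $\oLCAAD\hookrightarrow\oLCAA$ and in particular preserves cokernels. Under the identification $\oLCAAD\simeq\Mod\A/\fmod\A$ of Proposition \ref{proposition:fLocalization}.(\ref{enumerate:fLocalization4}), the functor $\QA'$ is a Serre quotient of an abelian category and therefore exact. Applying $\QA'\circ D$ to a conflation $X\inflation Y\deflation Z$ in $\oLCAA$ yields a right-exact sequence $\QA'D(X)\to\QA'D(Y)\to\QA'D(Z)\to 0$ in the abelian category $\Mod\A/\mod\A$; by abelianness the image of $\QA'D(X)$ automatically coincides with the kernel of $\QA'D(Y)\to\QA'D(Z)$. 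It therefore suffices to prove that $\QA'D(X)\to\QA'D(Y)$ is a monomorphism, equivalently, that the kernel $K$ of $D(X)\to D(Y)$ computed as an $\A$-module is finitely generated.

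To compute $K$, I lift the conflation using Proposition \ref{proposition:fLocalization}.(\ref{enumerate:fLocalization2}) to a conflation $X\stackrel{f}{\inflation}Y\stackrel{g}{\deflation}Z$ in $\LCAA$ and, via the Structure Theorem \ref{theorem:StructureTheoremOfLocallyCompactModules}, pick decompositions $T_M:=C_M\oplus V_M\inflation M\deflation D_M$ arranged so that $f(T_X)\subseteq T_Y$. This can be achieved by, if necessary, enlarging $T_Y$ to $T_Y+f(T_X)$; any two choices of $T_M$ in $\LCAA$ agree in $\oLCAA$, since the canonical torsion subobject from the torsion pair is unique up to unique isomorphism and the discrepancy is a finite module. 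With this setup the induced map $D_X=X/T_X\to Y/T_Y=D_Y$ has kernel $K=f^{-1}(T_Y)/T_X$ as an $\A$-module.

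The heart of the argument, and the step I expect to be the main obstacle, is showing that $K$ is finitely generated over $\A$. Because the inflation $f$ is a closed injection, $f^{-1}(T_Y)$ is a closed subgroup of $X$ that $f$ maps isomorphically onto a closed subgroup of $T_Y\cong C\oplus\bR^n$. By the classical structure theory of closed subgroups of compact-plus-vector LCA groups, such a subgroup is itself compactly generated and abstractly of the form $C'\oplus\bR^a\oplus\bZ^b$ with $C'$ compact. Since $K$ embeds into the discrete module $D_X$, it is discrete, which forces the connected component $(C')^{\circ}\oplus\bR^a$ of $f^{-1}(T_Y)$ to lie inside $T_X$; what survives in the quotient is then a finitely generated discrete abelian group. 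Because $\A$ is finitely generated as a $\bZ$-module, $K$ is finitely generated as an $\A$-module, hence $\QA'(K)=0$, and the proof is complete.
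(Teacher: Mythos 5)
Your proof is correct, but it takes a genuinely different route from the paper's. The paper works entirely inside $\oLCAA$: it applies the Structure Theorem for $\oLCAA$ to the conflation to obtain a morphism of torsion/torsion-free conflations, invokes the (dual of) B\"uhler's Proposition~7.6 to see that the map of torsion parts is an inflation, and runs the Short Snake Lemma; the key facts that the cokernel is $D_Z$ and the kernel is finitely generated then both fall out of Corollary~\ref{corollary:DiscreteInCV}. You instead split the problem formally: since $D$ is a left adjoint it preserves cokernels, and since $\QA'$ is a Serre quotient it is exact, so $\QA'D(X)\to\QA'D(Y)\to\QA'D(Z)\to 0$ is automatically right exact and (by abelianness) the image of the first map is the kernel of the second; the entire burden reduces to showing the kernel of $D_X\to D_Y$ is finitely generated, which you then do by a hands-on Pontryagin-structure-theory computation after lifting the conflation to $\LCAA$ and normalizing so that $f(T_X)\subseteq T_Y$. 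Your formal reduction is a nice observation and avoids the snake lemma entirely; the concrete kernel computation is in essence re-deriving the relevant case of Lemma~\ref{lemma:QuotientsAndDiscreteSubobjectsOfVectorModules}.(3)/Corollary~\ref{corollary:DiscreteInCV}.(3) directly (via the Morris structure theorem for closed subgroups), while the paper quotes those results as black boxes.

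Two small imprecisions worth flagging. First, to justify lifting a conflation from $\oLCAA$ back to $\LCAA$ you cite Proposition~\ref{proposition:fLocalization}.(\ref{enumerate:fLocalization2}), but that statement is about $\Qf$ commuting with finite (co)limits, not about lifting; the correct justification is that, by construction of the exact structure on the localization (Theorem~\ref{theorem:Quotient/LocalizationTheorem}.(2)), a kernel--cokernel pair in $\oLCAA$ is a conflation precisely when it is isomorphic to the image of a conflation from $\LCAA$. Second, when you enlarge $T_Y$ to $T_Y+f(T_X)$, you should note that the new subobject is still in $\LCAACV$ --- this is not immediate, but it follows from Lemma~\ref{lemma:QuotientsAndDiscreteSubobjectsOfVectorModules}.(1), since $(T_Y+f(T_X))/T_Y$ is finite (the image of a compact group in a discrete group). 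Neither gap is serious, and with these citations filled in the argument stands.
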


\begin{proof}
Let $X \inflation Y \deflation Z$ be a conflation in $\LCAA / \LCAAf.$  The Structure Theorem of $\oLCAA$ gives the following commutative diagram
\[\xymatrix{
C_X\ar@{>->}[r] \ar@{>->}[d] & X\ar@{->>}[r] \ar@{>->}[d] & {D_X} \ar[d]^{g} \\
C_Y\ar@{>->}[r] & Y\ar@{->>}[r] & {D_Y}}\]
where the left vertical arrow is an inflation by the dual of \cite[Proposition 7.6]{Buhler10} and the rightmost vertical arrow is strict by Proposition \ref{proposition:fLocalization}.(\ref{enumerate:fLocalization5}).  Applying the Short Snake Lemma (\cite[Corollary 8.13]{Buhler10}), we find exact sequences $\ker g \stackrel{i}{\inflation} C_Y /C_X \to Z \deflation \coker g$ and $\ker g \inflation D_X \stackrel{g}{\rightarrow} D_Y \deflation \coker g.$

It follows from Corollary \ref{corollary:DiscreteInCV} that $C_Y / C_X \in \LCAACV$ and hence $\coker i \in \LCAACV.$  Likewise, we find that $\coker g \in \LCAAD.$  This shows that the conflation $\coker i \inflation Z \deflation \coker g$ is the torsion / torsion-free conflation of $Z$ from Theorem \ref{theorem:TorsionPair}, hence $\coker g \cong D_Z.$

Moreover, it follows from Corollary \ref{corollary:DiscreteInCV} that $\ker g$ is finitely generated and discrete.  Hence, we find a conflation $\QA'(D_X) \inflation \QA'(D_Y) \deflation \QA'(D_Z)$ in $\Mod \A / \mod \A,$ as required.
\end{proof}

\begin{figure}[tb]
	\[\xymatrix{
		{\QA\colon} &{\Mod \A} \ar[rr]^-{\QfA} \ar[d] && {\Mod \A / \fmod \A} \ar[rr]^{\QA'} \ar[d]^{R} && {\Mod \A / \mod \A} \ar[d]^{\Phi} & \\
		{\QCV\colon} &{\LCAA} \ar[rr]^-{\Qf} && {\LCAA / \LCAAf} \ar[rr]^{Q'} \ar[d]^{D} && {\FF} \ar[d]^{\Psi}  &\\
		&&& {\Mod \A / \fmod \A} \ar[rr]^{\QA'} && {\Mod \A / \mod \A}	}\]
	\caption{Overview of the functors from Construction \ref{construction:Functors}.}
	\label{figure:Functors}
\end{figure}

\begin{construction}\label{construction:Functors}
We now construct the diagram given in Figure \ref{figure:Functors}.  We start with the rows.  The functor $\QA'$ is the unique functor such that $\QA = \QA'\circ \QfA,$ and $Q'$ is the unique functor such that $\QCV = Q' \circ \Qf$; these are induced by the universal properties of $\QfA$ and $\Qf$, respectively.

For the columns, the functor $\Mod \A \to \LCAA$ is the functor mapping an $\A$-module to the corresponding discrete $\A$-module.  The functor $R$ is the unique functor making the top-left square commute (it exists by the universal property of $\QfA$).  The (essential) image of $R$ corresponds to the torsion-free part of the torsion pair in Theorem \ref{theorem:TorsionPair}, hence $R$ has a left adjoint $D\colon \LCAA / \LCAAf \to \Mod \A / \fmod \A,$ given by mapping any object $X \in \LCAA / \LCAAf$ to its torsion-free part $D_X.$  By construction, $D \circ R \cong 1.$

In the last column, the functor $\Phi\colon \Mod \A / \mod \A \to \FF$ is the unique functor making the top rectangle commute; it exists by the universal property of $\QA\colon \Mod \A \to \Mod \A / \mod \A$ (see Remark \ref{remark:QCVFinitelyGenerated}).  Note that $\Phi$ is also the unique functor such that $\Phi \circ \QA' = Q' \circ R.$

The functor $\Psi\colon \FF \to \Mod \A / \mod \A$ is a functor such that $\Psi \circ \QCV \cong \QA' \circ D \circ \Qf;$ it exists by the universal property of $\QCV$ (see Proposition \ref{proposition:UniversalQCV}, note that $\QA' \circ D \circ \Qf$ is conflation-exact by Proposition \ref{proposition:ConflationExact}).
\end{construction}

\begin{proposition}\label{proposition:PsiPhi}
The functors $\Psi$ and $\Phi$ are quasi-inverses.
\end{proposition}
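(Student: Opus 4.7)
The plan is to verify $\Psi\circ\Phi \cong \mathrm{id}$ and $\Phi\circ\Psi \cong \mathrm{id}$ separately, each time by precomposing with the appropriate localisation functor and reducing to a statement that follows from Construction \ref{construction:Functors} together with the torsion pair in Theorem \ref{theorem:TorsionPair}.

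For the direction $\Psi\circ\Phi \cong \mathrm{id}_{\Mod\A/\mod\A}$, I first combine the defining isomorphism $\Psi\circ\QCV \cong \QA'\circ D\circ\Qf$ with the factorisation $\QCV = Q'\circ\Qf$; the universal property of $\Qf$ then yields $\Psi\circ Q' \cong \QA'\circ D$ as functors $\LCAA/\LCAAf \to \Mod\A/\mod\A$. Combining this with the equality $\Phi\circ\QA' = Q'\circ R$ and the identity $D\circ R \cong \mathrm{id}$ from Construction \ref{construction:Functors}, I compute
\[\Psi\circ\Phi\circ\QA' \;\cong\; \Psi\circ Q'\circ R \;\cong\; \QA'\circ D\circ R \;\cong\; \QA'.\]
A further precomposition with $\QfA$ gives $\Psi\circ\Phi\circ\QA \cong \QA$, and the universal property of the localisation $\QA$ then forces $\Psi\circ\Phi \cong \mathrm{id}$.

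For the direction $\Phi\circ\Psi \cong \mathrm{id}_\FF$, by the universal property of $\QCV$ (Proposition \ref{proposition:UniversalQCV}) it suffices to produce a natural isomorphism
\[\Phi\circ\Psi\circ\QCV \;\cong\; \Phi\circ\QA'\circ D\circ\Qf \;=\; Q'\circ R\circ D\circ\Qf \;\stackrel{?}{\cong}\; Q'\circ\Qf \;=\; \QCV,\]
so the problem reduces to showing $Q'\circ R\circ D \cong Q'$ as functors $\LCAA/\LCAAf \to \FF$. For each $M \in \oLCAA$, Theorem \ref{theorem:TorsionPair} supplies a natural conflation $C_M \inflation M \deflation D_M$ with $C_M \in \oLCAACV$ and $D_M \cong R(D(M))$. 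Since $\QCV$ annihilates $\LCAACV$ by construction and $\QCV = Q'\circ\Qf$, Proposition \ref{proposition:fLocalization}.(\ref{enumerate:fLocalization4}) implies that $Q'$ annihilates $\oLCAACV$. Applying the conflation-exact functor $Q'$ to the above conflation therefore yields a conflation $0 \inflation Q'(M) \deflation Q'(R(D(M)))$, and in an inflation-exact category such a sequence forces the deflation to be an isomorphism. Naturality of the unit $M \to D_M$ of the adjunction $D \dashv R$ upgrades this pointwise identification to the required natural isomorphism.

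The main obstacle is purely organisational: one must track that $Q'$ is conflation-exact (which is inherited from the universal property of $\Qf$ as a quotient of conflation categories applied to the conflation-exact functor $\QCV = Q'\circ\Qf$) and that $Q'$ vanishes on $\oLCAACV$. Once these are in hand, the proof is a short diagram chase around Figure \ref{figure:Functors} powered by the torsion pair of Theorem \ref{theorem:TorsionPair} and the equality $D\circ R \cong \mathrm{id}$.
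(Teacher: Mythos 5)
Your proof is correct and follows essentially the same route as the paper: both directions are verified by precomposing with the relevant localization functors ($\QA'$ on one side, $\QCV$ on the other) and invoking their universal properties, with the torsion pair of Theorem \ref{theorem:TorsionPair} and $D \circ R \cong \mathrm{id}$ doing the work. The paper is more terse---it simply asserts that $Q'(p_M)$ is an isomorphism---whereas you spell out the reason (that $Q'$ is conflation-exact and kills $\oLCAACV$), which is a worthwhile elaboration but not a different argument.
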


\begin{proof}
For each $M \in \LCA$, the map $p_M\colon M \mapsto R(D_M)$ corresponds to the unit of the adjunction $D \dashv R$.  As $Q'(p_M)$ is an isomorphism, we find that $\Phi \circ \Psi \circ \QCV \cong Q' \circ R \circ D \circ \Qf$ is isomorphic to $\QCV = Q' \circ \Qf.$  It follows from the universal property of $\QCV$ that $1_\FF \to \Phi \circ \Psi$ is a natural equivalence.

For the other direction, we start from $\QA' \cong \QA'\circ D \circ R = \Psi \circ \Phi \circ \QA',$ so that the universal property of $\QA'$ yields that $\Psi \circ \Phi \cong 1$, as required. 
\end{proof}

\subsection{\texorpdfstring{Proof of Theorem \ref{theorem:MainTheorem}}{Proof of the main Theorem}}  We are now in a position to prove the main theorem.

\begin{proof}[Proof of Theorem \ref{theorem:MainTheorem}]
	Consider the essentially commutative diagram
	\[\xymatrix{
		\smod(\mathfrak{A})\ar[r]\ar[d] & \Mod(\mathfrak{A})\ar[r]^-{Q_{\mathfrak{A}}}\ar[d] & \Mod(\mathfrak{A})/\smod(\mathfrak{A})\ar[d]^{\rotatebox{90}{$\simeq$}}_{\Phi}\\
		\VV\ar[r] & {\ex{\EE}}\ar[r] & \FF
	}\] of functors, lifting to an essentially commutative diagram of the bounded derived $\infty$-categories (where the rows are exact sequences).  It was shown in Lemma \ref{lemma:BasicLemma}.(\ref{item:VVisEquivalentToModAR}) that $\VV\simeq \mod \A_\bR$; the leftmost downwards arrow is given by $M \mapsto \bR \otimes_\bZ M$.  This induces a bicartesian square of stable $\infty$-categories
	\[\xymatrix{
	\Dbinf(\mod \A) \ar[r] \ar[d] & \Dbinf(\Mod \A) \ar[d] \\
	\Dbinf(\mod \A_\bR) \ar[r] & \Dbinf(\ex{\EE}).}\]
	Using the Eilenberg swindle with direct sums, shows that every object in $\Dbinf(\Mod \A)$ gets trivialized under a localizing invariant $K\colon \StabInfCat \to \AA.$  Hence, for each such $K$, there is a fiber sequence $K(\Dbinf(\mod \A)) \to K(\Dbinf(\mod \A_\bR)) \to K(\Dbinf(\ex{\EE})).$  Combining Theorem \ref{proposition:ExactHull} and Proposition \ref{proposition:FirstLocalization}, we find that $K(\Dbinf(\LCAA)) \simeq K(\Dbinf(\EE)) \simeq K(\Dbinf(\ex{\EE})).$ Using our convention to suppress $\Dbinf$ in the notation whenever convenient, this yields the required fiber sequence as formulated in the introduction.
\end{proof}

\providecommand{\bysame}{\leavevmode\hbox to3em{\hrulefill}\thinspace}
\providecommand{\MR}{\relax\ifhmode\unskip\space\fi MR }
\providecommand{\MRhref}[2]{%
  \href{http://www.ams.org/mathscinet-getitem?mr=#1}{#2}
}
\providecommand{\href}[2]{#2}

\end{document}